\newtheorem{teo}{Theorem}
\newtheorem{lem}{Lemma}
\newtheorem{deff}{Definition}
\newtheorem{rem}{Remark}
 \title{{\bf Asymptotics   of Eigenvalues for  Differential Operators of   Fractional Order}}
\author{Maksim \,V.~Kukushkin   \\ \\
 \small  \textit{Moscow State University of Civil Engineering}\\\textit{\small\textit{Russia, Moscow, 129337,}}\\
 \small  \textit{Kabardino-Balkarian Scientific Center, RAS, }\\\textit{\small\textit{Russia, Nalchik, 360051, kukushkinmv@rambler.ru}} }
\date{}
\begin{document}

\maketitle

\begin{abstract}
In this paper we  deal with a second order  multidimensional fractional differential operator. We consider a case where the  leading  term represented by the uniformly elliptic operator  and the final term is the Kipriyanov  operator of  fractional differentiation. We conduct  classification of such a type of  operators by   belonging of  their   resolvent to the Schatten-von Neumann  class and formulate  the  sufficient condition for  completeness of the root functions system. Finally we obtain an   asymptotic formula.
\end{abstract}
\begin{small}\textbf{Keywords:} Operators of fractional differentiation;  Weyl  asymptotic; Schatten-von Neumann  class;
  sectorial property; accretive property; operators with  a compact resolvent; root vectors.\\\\
{\textbf{MSC} 47F05; 47A10; 47A07; 47B10; 47B25. }
\end{small}

\section{Introduction and    preliminary remarks}

Everywhere further in abstract  definitions  we consider   linear    operators acting in a separable complex  Hilbert space $\mathfrak{H}.$
Denote  by  $ C,C_{i} ,\,i\in \mathbb{N}_{0}$ arbitrary  positive constants.
Let $ \mathcal{B} (\mathfrak{H})$ be  a set of linear bounded operators acting in the Hilbert space $\mathfrak{H}.$     By  $\mathrm{D} (A),\, \mathrm{R} (A),\,\mathrm{N}(A)$ we   denote  the domain of definition,  range, and
inverse image of zero of the operator $A$ respectively.  We use the denotation $ \mathrm{nul}\, A:=\mathrm{dim}\, \mathrm{N}(A).$
  Denote by $R_{A}(\lambda),\,R_{A}(0):=R_{A}$    the resolvent of the operator $A$ and let  $\mathrm{P}(A)$ be    the   resolvent set of the operator $A.$ By $\tilde{A}$ we denote   the closure of the operator $A.$
  Using the definition \cite[p.46]{firstab_lit:1Gohberg1965}  we define  $s$-numbers for a compact operator $A.$ By definition, put
$
s_{i}(A)=\lambda_{i}(T),\,i=1,2,...,r(T),
$
where $  T=(A^{\ast}A)^{1/2},\,r(T)={\rm dim}\, \mathrm{R}(T).$
  Suppose $s_{i}=0,\,i=r(T)+1,...,$ if $r(T)<\infty.$  According  to the terminology of the monograph   \cite{firstab_lit:1Gohberg1965}  a dimension  of the  root vectors subspace  corresponding  to a certain  eigenvalue $\lambda_{k}$  is called {\it algebraic multiplicity} of the eigenvalue $\lambda_{k}.$
 Denote
by $\nu(A)$ the sum of all algebraic multiplicities of the operator $A.$   Let  $\mathfrak{S}_{p}(\mathfrak{H}),\, 0< p<\infty $ be       the Schatten-von Neumann    class and      $\mathfrak{S}_{\infty}(\mathfrak{H})$ be the set of compact operators. By definition, put
$$
\mathfrak{S}_{p}(\mathfrak{H}):=\left\{ T: \mathfrak{H}\rightarrow \mathfrak{H},  \sum\limits_{i=1}^{\infty}s^{p}_{i}(T)<\infty,\;0< p<\infty \right\}.
$$
  Let  $A_{\mathfrak{R}}:= \left(A+A^{*}\right)/2,\,A_{\mathfrak{I}}:= \left(A-A^{*}\right)/2i$ be
 a so-called real   and imaginary component  of the operator $A$  respectively.
 Denote by  $\mu(A) $   order of the densely defined  operator $A$ with a compact resolvent   if we have
the estimate  $s_{n}(R_{A})\leq   C \,n^{-\mu},\,n\in \mathbb{N},\,0\leq\mu<\infty.$ Let $\{x_{n}\}_{1}^{\infty},\{y_{n}\}_{1}^{\infty}$ be sequences consist of positive real numbers. If there exist constants  $c_{1},c_{2}>0$ such that   $ c_{1} x_{n}\leq y_{n}\leq c_{2} x_{n},\,n\in \mathbb{N},$  then we write $x_{n}\asymp y_{n}.$
In accordance with  the terminology of the monograph  \cite{firstab_lit:kato1980} the set $\Theta(A):=\{z\in \mathbb{C}: z=(Af,f)_{\mathfrak{H}},\,f\in \mathrm{D}(A),\,\|f\|_{\mathfrak{H}}=1\}$ is called a   {\it numerical range}  of the  operator $A.$
Denote by  $\widetilde{\Theta}(A) $ the  closure of the  set $\Theta(A).$
 We use the definition of  the   sectorial property given in \cite[p.280]{firstab_lit:kato1980}. An  operator $A$ is called   a {\it sectorial } operator  if its  numerical range   belongs to a  closed
sector     $\mathfrak{ L}_{\gamma}(\theta):=\{\zeta:\,|\arg(\zeta-\gamma)|\leq\theta<\pi/2\} ,$ where      $\gamma$ is a vertex   and  $ \theta$ is a semi-angle of the sector   $\mathfrak{ L}_{\gamma}(\theta).$
We  shall  say that the  operator $A$ has a positive sector       if $\mathrm{Im}\,\gamma=0,\,\gamma>0.$
According to the  terminology  of the monograph   \cite{firstab_lit:kato1980} an operator $A$ is called    {\it strictly  accretive}   if the following relation  holds  $\mathrm{Re}(Af,f)_{\mathfrak{H}}\geq C\|f\|^{2}_{\mathfrak{H}},\,f\in \mathrm{D}(A).$ In accordance with  the definition  \cite[p.279]{firstab_lit:kato1980}   an  operator $A$ is called    {\it m-accretive}     if the next relation  holds $(A+\zeta)^{-1}\in \mathcal{B}(\mathfrak{H}),\,\|(A+\zeta)^{-1}\| \leq   (\mathrm{Re}\zeta)^{-1},\,\mathrm{Re}\zeta>0. $
An operator $A$ is called    {\it m-sectorial}   if $A$ is   sectorial    and $A+ \beta$ is m-accretive   for some constant $\beta.$   An operator $A$ is called     {\it symmetric}     if one is densely defined and the next equality  holds $(Af,g)_{\mathfrak{H}}=(f,Ag)_{\mathfrak{H}},\,f,g\in  \mathrm{D} (A).$
A symmetric operator is called     {\it positive}        if       values of its  quadratic form  are nonnegative. Denote by $\mathfrak{H}_{A},\,\|\cdot\|_{A}$ the  energetic space generated by the operator $A$  and the norm on this space respectively   (see  \cite{firstab_lit:Eb. Zeidler},\cite{firstab_lit:mihlin1970}).
In accordance with  the denotation of the paper   \cite{firstab_lit:kato1980}   we consider a      sesquilinear form   $ \mathfrak{t}  [\cdot,\cdot]$
defined on a linear manifold  of the Hilbert space $\mathfrak{H}$ (further we use the term {\it form}).  Denote by $  \mathfrak{t} [\cdot ]$ the quadratic form corresponding to the sesquilinear form $\mathfrak{t}  [\cdot,\cdot].$
 Denote   by  $\mathfrak{Re} \,  \mathfrak{t}=(\mathfrak{t}+\mathfrak{t}^{\ast})/2,\,\mathfrak{Im}\,\mathfrak{t}   =(\mathfrak{t}-\mathfrak{t}^{\ast})/2i  $
  the real and imaginary component    of the   form $ \mathfrak{t}$ respectively, where $\mathfrak{t}^{\ast}[u,v]=\mathfrak{t} \overline{[v,u]},\;\mathrm{D}(\mathfrak{t} ^{\ast})=\mathrm{D}(\mathfrak{t}).$ According to these definitions, we have $\mathfrak{Re} \,
 \mathfrak{t}[\cdot]=\mathrm{Re}\,\mathfrak{t}[\cdot],\,\mathfrak{Im} \, \mathfrak{t}[\cdot]=\mathrm{Im}\,\mathfrak{t}[\cdot].$ Denote by $\tilde{\mathfrak{t}}$ the closure   of the   form $\mathfrak{t}\,.$ The range of the quadratic form   $\mathfrak{t}[f],\,f\in \mathrm{D}(\mathfrak{t}),\,\|f\|_{\mathfrak{H}}=1$ is called a  {\it range} of the sesquilinear form  $\mathfrak{t} $ and is denoted by $\Theta(\mathfrak{t}).$
 A  form $\mathfrak{t}$ is called    {\it sectorial}    if  its    range  belongs to a sector  having  a  vertex $\gamma$  situated at the real axis and a semi-angle $0\leq\theta<\pi/2.$  Suppose   $\mathfrak{l}$ is a closed sectorial form; then  a linear  manifold  $\mathrm{D}'\subset\mathrm{D} (\mathfrak{l})$   is
called a core of $\mathfrak{l}$ if the restriction   of $\mathfrak{l}$ to   $\mathrm{D}'$ has the   closure
$\mathfrak{l}.$    Due to Theorem 2.7 \cite[p.323]{firstab_lit:kato1980}  there exist unique    m-sectorial operators  $A_{\mathfrak{l}},A_{\mathfrak{Re}\, \mathfrak{l}} $  associated  with   the  closed sectorial   forms $\mathfrak{l},\mathfrak{Re}\, \mathfrak{l}$ respectively.  The operator  $A_{\mathfrak{Re}\, \mathfrak{l}} $ is called a {\it real part} of the operator $A_{\mathfrak{l}}$ and is denoted by  $Re\, A_{\mathfrak{l}}.$ Suppose  $A$ is a sectorial densely defined operator and $\mathfrak{k}[u,v]:=(Au,v)_{\mathfrak{H}},\,\mathrm{D}(\mathfrak{k})=\mathrm{D}(A);$  then
 due to   Theorem 1.27 \cite[p.318]{firstab_lit:kato1980}   the   form $\mathfrak{k}$ is   closable, due to
   Theorem 2.7 \cite[p.323]{firstab_lit:kato1980} there exists the unique m-sectorial operator   $T_{\tilde{\mathfrak{k}}}$ associated  with  the form $\tilde{\mathfrak{k}}.$  In accordance with the  definition \cite[p.325]{firstab_lit:kato1980} the    operator $T_{\tilde{\mathfrak{k}}}$ is called a  {\it Friedrichs extension} of the operator $A.$
Everywhere further,  unless otherwise stated,   we  use the notations of \cite{firstab_lit:kato1980}, \cite{firstab_lit:1Gohberg1965}, \cite{firstab_lit:kipriyanov1960}, \cite{firstab_lit:1kipriyanov1960}, \cite{firstab_lit:samko1987}.

In accordance with  the  notation  of the paper   \cite{firstab_lit:kipriyanov1960} we assume that $\Omega$ is a convex domain of the $n$ - dimensional Euclidean space $\mathbb{E}^{n}$, $P$ is a fixed point of the boundary $\partial\Omega,$ and
$Q(r,\vec{\mathbf{e}})$ is an arbitrary point of $\Omega.$   Denote by $\vec{\mathbf{e}}$ an  unit vector having  the  direction from $P$ to $Q,$   by $r=|P-Q|$ an Euclidean distance between the points $P$ and $Q.$
We   consider the  Lebesgue classes  $L_{p}(\Omega),\;1\leq p<\infty $ of complex valued functions.
By definition,  put
\begin{equation*}
\int\limits_{\Omega}|f(Q)|^{p}dQ=\int\limits_{\omega}d\chi\int\limits_{0}^{d(\vec{\mathbf{e}})}|f(Q)|^{p}r^{n-1}dr<\infty,\,f\in L_{p}(\Omega),
\end{equation*}
where $d\chi$ is an element of a solid angle of
the  the  unit sphere surface    in the space $\mathbb{E}^{n}$ and $\omega$   is the  surface of this sphere,   $d:=d(\vec{\mathbf{e}})$   is a length of the  segment of the  ray going from the point $P$ in the direction
$\vec{\mathbf{e}}$ within the domain $\Omega.$
Without lose of   generality,  we consider only those directions of $\vec{\mathbf{e}}$ for which the inner integral  exists and  is  finite. This is the well known fact that they are almost all directions.
We use  the shorthand notation $P\cdot Q=P^{i}Q_{i}=\sum^{n}_{i=1}P_{i}Q_{i}$ for the inner product of the points
$P=(P_{1},P_{2},...,P_{n}),\,Q=(Q_{1},Q_{2},...,Q_{n}),\, P,Q\in \mathbb{E}^{n}.$
      Denote by $D_{i}f$  a  generalized derivative of the function $f$ with respect to the coordinate variable     $x_{i},\,1\leq i\leq n.$
We   assume that all functions has a zero extension outside  of $\bar{\Omega}.$
Let   $W_2 ^l(\Omega)$ be   Sobolev  spaces or according to  another  denotation $H^{l}(\Omega):=W_2 ^l(\Omega),\,l=0,1,...\,.$         We consider the space $H^{1}_{0}(\Omega)$ endowed with   the norm
$$
 \|\cdot\|_{H_{0}^{1}(\Omega)}=  \left(\sum\limits_{i=1}^{n} \int\limits_{\Omega}   \left| (D_{i}\cdot) (Q)\right|^{2}dQ \right)^{1/2}.
$$
  We also use the shorthand denotation  $L_{p}:=L_{p}(\Omega),\, H^{1}_{0}:=H^{1}_{0}(\Omega).$
Denote by  ${\rm Lip}\, \mu,\;0<\mu\leq1 $      the set of  functions satisfying the Holder-Lipschitz condition
$$
{\rm Lip}\, \mu:=\left\{\rho(Q):\;|\rho(Q)-\rho(P)|\leq   \mathcal{M } \, r^{\mu},\;P,Q\in \bar{\Omega}\right\}.
$$
Consider   Kipriyanov's  fractional differential  operator      defined by the following  formal expression  (see \cite{firstab_lit:1kipriyanov1960})
\begin{equation*}
(\mathfrak{D}^{\alpha}f)(Q)=\frac{\alpha}{\Gamma(1-\alpha)}\int\limits_{0}^{r} \frac{[f(Q)-f(P+\vec{\mathbf{e}}t)]}{(r - t)^{\alpha+1}} \left(\frac{t}{r} \right) ^{n-1} dt+
C^{(\alpha)}_{n} f(Q) r ^{ -\alpha},\; P\in\partial\Omega,
\end{equation*}
where
$
C^{(\alpha)}_{n} = (n-1)!/\Gamma(n-\alpha).
$
 The properties of this operator were studied in the papers  \cite{firstab_lit:kipriyanov1960}-\cite{firstab_lit:2.2kipriyanov1960},\cite{firstab_lit:1.1kukushkin2017}.  It is easy  to see that  the  Kipriyanov   operator coincides with the Marchaud  operator  \cite{firstab_lit:samko1987}  on the segment in one dimensional case.
 According to     Theorem 2  \cite{firstab_lit:1kipriyanov1960} the Kipriyanov  operator   acts  as follows
\begin{equation}\label{1}
\mathfrak{D}^{\alpha}:H_{0}^{1}(\Omega)\rightarrow L_{2}(\Omega),
  \;0<\alpha<1,\,(n=2,3,...),
\end{equation}
 The case (n=1) is also  obtained  due to the property of the Marchaud operator (see \cite{firstab_lit:samko1987}).
In accordance with the definition given in  the paper  \cite{firstab_lit:1kukushkin2018} we consider the directional  fractional integrals.  By definition, put
$$
 (\mathfrak{I}^{\alpha}_{0+}f)(Q  ):=\frac{1}{\Gamma(\alpha)} \int\limits^{r}_{0}\frac{f (P+t\vec{\mathbf{e}})}{( r-t)^{1-\alpha}}\left(\frac{t}{r}\right)^{n-1}\!\!\!\!dt,\,(\mathfrak{I}^{\alpha}_{d-}f)(Q  ):=\frac{1}{\Gamma(\alpha)} \int\limits_{r}^{d }\frac{f (P+t\vec{\mathbf{e}})}{(t-r)^{1-\alpha}}\,dt,
$$
$$
\;f\in L_{p}(\Omega),\;1\leq p\leq\infty.
$$
Also,     we   consider auxiliary operators   the so-called   truncated directional  fractional derivatives    (see \cite{firstab_lit:1kukushkin2018}).  By definition, put
 \begin{equation*}
 ( \mathfrak{D} ^{\alpha}_{0+,\,\varepsilon}f)(Q)=\frac{\alpha}{\Gamma(1-\alpha)}\int\limits_{0}^{r-\varepsilon }\frac{ f (Q)r^{n-1}- f(P+\vec{\mathbf{e}}t)t^{n-1}}{(  r-t)^{\alpha +1}r^{n-1}}   dt+\frac{f(Q)}{\Gamma(1-\alpha)} r ^{-\alpha},\;\varepsilon\leq r\leq d ,
 $$
 $$
 (\mathfrak{D}^{\alpha}_{0+,\,\varepsilon}f)(Q)=  \frac{f(Q)}{\varepsilon^{\alpha}}  ,\; 0\leq r <\varepsilon ;
\end{equation*}
\begin{equation*}
 ( \mathfrak{D }^{\alpha}_{d-,\,\varepsilon}f)(Q)=\frac{\alpha}{\Gamma(1-\alpha)}\int\limits_{r+\varepsilon }^{d }\frac{ f (Q)- f(P+\vec{\mathbf{e}}t)}{( t-r)^{\alpha +1}} dt
 +\frac{f(Q)}{\Gamma(1-\alpha)}(d-r)^{-\alpha},\;0\leq r\leq d -\varepsilon,
 $$
 $$
  ( \mathfrak{D }^{\alpha}_{d-,\,\varepsilon}f)(Q)=      \frac{ f(Q)}{\alpha} \left(\frac{1}{\varepsilon^{\alpha}}-\frac{1}{(d -r)^{\alpha} }    \right),\; d -\varepsilon <r \leq d .
 \end{equation*}
  Now we can  consider the directional   fractional derivatives. By definition, put
 \begin{equation*}
 \mathfrak{D }^{\alpha}_{0+}f=\lim\limits_{\stackrel{\varepsilon\rightarrow 0}{ (L_{p}) }} \mathfrak{D }^{\alpha}_{0+,\varepsilon} f  ,\;
  \mathfrak{D }^{\alpha}_{d-}f=\lim\limits_{\stackrel{\varepsilon\rightarrow 0}{ (L_{p}) }} \mathfrak{D }^{\alpha}_{d-,\varepsilon} f ,\,1\leq p\leq\infty.
\end{equation*}
The properties of these operators  are  described  in detail in the paper  \cite{firstab_lit:1kukushkin2018}. Similarly to the monograph \cite{firstab_lit:samko1987} we consider   left-side  and  right-side cases. For instance, $\mathfrak{I}^{\alpha}_{0+}$ is called  a left-side directional  fractional integral  and $ \mathfrak{D }^{\alpha}_{d-}$ is called a right-side directional fractional derivative.
We   consider a  second order differential operator   with the Kipriyanov  fractional derivative  in  the final term
\begin{equation}\label{2}
 Lu:=-  D_{j} ( a^{ij} D_{i}f) \, +\rho\, \mathfrak{D}^{ \alpha } f ,\;\;
 $$
  $$
  \mathrm{D } (L)=H^{2}(\Omega)\cap H^{1}_{0}(\Omega),
 \end{equation}
we assume  that the  coefficients are  real-valued and satisfy the following conditions
 \begin{equation}\label{3}
 a^{ij}(Q)_{ i,j=\overline{1,n}  }\in C^{1}(\bar{\Omega}),\,\mathcal{A}= \sup\limits_{Q\in \Omega} \left(\sum\limits_{i,j=1}^{n}
|a_{ij}(Q)|^{2} \right)^{1/2}\!\!\!,\,  a^{ij}\xi _{i}  \xi _{j}  \geq   a  |\xi|^{2} ,\,  a  >0 ,
$$
$$
\rho(Q)\in {\rm Lip\,\mu},\, \mu>\alpha ,\,0<\varrho<\rho(Q)< \mathcal{P}.
\end{equation}
We also   consider a formal adjoint  operator
\begin{equation*}
 L^{+}f:=-  D_{i} ( a^{ij} D_{j}f)  + \mathfrak{D}  ^{\alpha}_{ d -}\rho f  ,\;\mathrm{D}(L^{+})=\mathrm{D}(L).
 \end{equation*}
For the sake of   simplicity, we should formulate the following trivial propositions   which are not  worth studying in the main part of the paper.
Using the results of the  the paper \cite{firstab_lit:1kukushkin2018} it can easily be checked that
\begin{equation}\label{4}
\mathrm{Re}(f,\rho\,\mathfrak{D}^{ \alpha } f)_{L_{2}(\Omega )}\geq\frac{1}{\eta^{2}}\|f\|^{2}_{L_{2}(\Omega )}, \,\eta\neq0 ,\,f\in H_{0}^{1}(\Omega),
\end{equation}
for $\varrho$  sufficiently large.  Using condition \eqref{3}, we obtain
\begin{equation}\label{5}
-\mathrm{Re}( D_{j} [ a^{ij} D_{i}f],f)_{L_{2} }\geq a\|f\|^{2}_{H^{1}_{0} }  ,\,f\in \mathrm{D}(L),
\end{equation}
 Combining \eqref{4} and \eqref{5}, we get
\begin{equation}\label{6}
\Theta(L),\Theta(L^{+})\subset \{ z\in    \mathbb{C}  : \mathrm{Re}z>C\},
\end{equation}
 where $\eta$ depends on parameters of expression \eqref{2}. Under these  assumptions, due to the results of the paper \cite{firstab_lit(JFCA)kukushkin2018}, we have
\begin{equation}\label{7}
\mathrm{R}(L)=\mathrm{R}(L^{+})=L_{2}(\Omega).
\end{equation}
Now it can easily be checked that $L^{+}=L^{\ast}.$ Hence by virtue of the well-known theorem we   conclude that $R^{\ast}_{L^{\,} }=R^{\,}_{L^{\ast}}.$
Combining \eqref{6}  and \eqref{7},
  we will have no difficulty in establishing the fact $\tilde{L}=L,$  see  Problem 5.15 \cite[p.165]{firstab_lit:kato1980}.
 Further,    we   use  the shorthand  notation   $H:=L_{\mathfrak{R}},\,  V:=\left(R_{L}\right)_{\mathfrak{R}}.$ Arguing as above, we see that  $H$ is closed and
  $\mathrm{R}(H)=L_{2}(\Omega).$ Due to the  reasoning of    Theorem 4.3 \cite{firstab_lit:1kukushkin2018},  we know  that $H$ is selfadjoint.  By virtue of \eqref{6}, we have
\begin{equation}\label{8}
\Theta(H) \subset \{ z\in    \mathbb{R}  :  z > C\}.
\end{equation}
Using  \eqref{8}, we obtain that $R_{H}$ is selfadjoint and positive, see Theorem 3 \cite[p.136]{firstab_lit: Ahiezer1966}.
Since $V$ is  symmetric   and $\mathrm{D}(V)=L_{2}(\Omega),$ then    $V$ is selfadjoint, see Theorem 1 \cite[p.136]{firstab_lit: Ahiezer1966}.  In the same way, we conclude that $V$ is positive. Note  that  due to Theorem 5.3 \cite{firstab_lit:1kukushkin2018} the operators $V$ and $R_{H}$ are compact. Further, if it is not stated overwise, we suppose  that inequality \eqref{4} is fulfilled.

Before our main consideration, we would like to make some remarks. Particularly we will discuss possible methods for achieving prospective results.
 The eigenvalue problem is still relevant for second order fractional  differential operators. Many papers were  devoted to this question, for instance the papers
\cite{firstab_lit:1Nakhushev1977}, \cite{firstab_lit:1Aleroev1984}-\cite{firstab_lit:3Aleroev2000}, \cite{firstab_lit:2kukushkin2017}.   The singular number   problem for a resolvent of  the second order differential operator
  with   Riemman-Liuvile's fractional derivative in the  final  term  is considered in the paper \cite{firstab_lit:1Aleroev1984}. It is proved
that the resolvent   belongs to the Hilbert-Shmidt class.  The problem of completeness
of  the root functions  system is researched in the paper \cite{firstab_lit:Aleroev1989}, also a similar problem is considered
in the paper \cite{firstab_lit:3Aleroev2000}. We would like to research a multidimensional case which can be reduced to
the cases considered in the papers listed above. For this purpose we deal with the extension of the
Kiprianov  fractional differential operator considered in detail in the papers \cite{firstab_lit:kipriyanov1960}-\cite{firstab_lit:2.2kipriyanov1960}.
We will apply  a  method of researching based on properties of the real component of the operator, but first let us verify an opportunity of using results obtained previously. For instance the studied   operator \eqref{2} can be represented by a sum and can be considered as a  perturbation of one operator by another one. In accordance with this   let us say a couple of words on the perturbation theory.

 It is remarkable that initially the perturbation theory of selfadjoint operators was born in  the  works of M. Keldysh \cite{firstab_lit:1keldysh 1951}-\cite{firstab_lit:3keldysh 1971} and had been  motivated by  the   works of such famous scientists as T. Carleman \cite{firstab_lit:1Carleman} and Ya.  Tamarkin  \cite{firstab_lit:1Tamarkin}. Over time, many papers were  published within  the  framework of this theory, for instance    F. Browder \cite{firstab_lit:1Browder},  M. Livshits \cite{firstab_lit:1Livshits}, B. Mukminov \cite{firstab_lit:1Mukminov}, I. Glazman \cite{firstab_lit:1Glazman}, M. Krein \cite{firstab_lit:1Krein}, B. Lidsky \cite{firstab_lit:1Lidskii},  A. Marcus \cite{firstab_lit:1Markus},\cite{firstab_lit:2Markus}, V. Matsaev \cite{firstab_lit:1Matsaev}-\cite{firstab_lit:3Matsaev}, S. Agmon \cite{firstab_lit:2Agmon}, V. Katznelson \cite{firstab_lit:1Katsnel'son}.
Nowadays'  there  exists   a  huge amount of theoretical results formulated  in the paper  of A. Shkalikov    \cite{firstab_lit:Shkalikov A.}. However for using these results we must have  a representation   of an initial operator by  the sum of   a  main part  a  so-called   non-perturbing operator and an  operator-perturbation.
  It is essentially the main part must be an operator of a special type either  selfadjoint or normal operator. If we consider a case  where  in the representation   the  main part  neither selfadjoint nor normal and we can not approach the  required representation in an  obvious  way, then we can use  the  other technique  based on    properties  of the real component of the initial operator.

We stress that  both summands of sum \eqref{2} are
neither selfadjoint  no normal   operators. We also   notice  that  search for  a convenient representation  of $L$ by a sum
of a selfadjoint operator and an operator-perturbation does not seem to be a reasonable   way. Now  to justify this  claim  we consider one of  possible representations of $L$ by  a sum.   Consider the
  Hilbert space $\mathfrak{H}$ and a selfadjoint strictly accretive operator
$  \mathcal{T}  :\mathfrak{H}\rightarrow \mathfrak{H}. $

\begin{deff}
In accordance with the   definition of the paper  \cite{firstab_lit:Shkalikov A.},  a quadratic form $\mathfrak{a}: = \mathfrak{a}[f]$ is called a $\mathcal{T}$ - subordinated form if the following condition holds
\begin{equation}\label{9}
\left|\mathfrak{a}[f]\right| \leq b\, \mathfrak{t}[f] +M\|f\|^{2}_{\mathfrak{H}},\;
   \mathrm{D} (\mathfrak{a}) \supset  \mathrm{D}(\mathfrak{t}),\; b < 1,\; M > 0,
\end{equation}
where $\mathfrak{t}[f]=\|\mathcal{T}^{\frac{1}{2}}\|^{2}_{\mathfrak{H}},\,f\in \mathrm{D}(\mathcal{T}^{\frac{1}{2}}).$
 The form $\mathfrak{a}$  is called  a completely $\mathcal{T}$ - subordinated form   if besides  of \eqref{9}  we have the following additional condition  $\forall \varepsilon>0 \,\exists b,M>0:\, b <\varepsilon.$
\end{deff}
Let us consider a trivial  decomposition  of the operator $L$ on the sum $L=2 L_{\mathfrak{R}}-L^{+}$ and let us use the notation $\mathcal{T}:=2L_{\mathfrak{R}},\, \mathcal{A}:=-L^{+}.$ Then we have $L=\mathcal{T}+\mathcal{A}.$
Due to the sectorial property
proven in   Theorem 4.2 \cite{firstab_lit:1kukushkin2018}  we have
\begin{equation}\label{10}
 |(\mathcal{A}f,f)_{L_{2} }| \!=\!\sec \theta_{f}\, |\mathrm{Re}(\mathcal{A}f,f)_{L_{2} }|\! = \!\sec \theta_{f}\, \frac{1}{2}(\mathcal{T}f,f)_{L_{2} },\,f\in \mathrm{D}(\mathcal{T}),
\end{equation}
 where $ 0\leq\theta_{f}\leq \theta,\, \theta_{f}:=\left|\mathrm{arg}  (L^{+}f,f)_{L_{2} }\right|,\, L_{2}:=L_{2}(\Omega)  $   and $\theta$ is a semi-angle   corresponding to the sector $\mathfrak{L}_{0}.$
  Due to Theorem 4.3 \cite{firstab_lit:1kukushkin2018}  the operator $\mathcal{T}$ is m-accretive. Hence in consequence of Theorem
3.35  \cite[p.281]{firstab_lit:kato1980} $\mathrm{D}(\mathcal{T} )$ is a core of the operator $\mathcal{T}^{\frac{1}{2}}.$ It implies that we can extend   relation \eqref{10} to
\begin{equation}\label{11}
 \frac{1}{2} \, \mathbf{\mathfrak{t}}[f]\leq|\mathbf{\mathfrak{a }}[f]|\leq \sec \theta \frac{1}{2} \, \mathbf{\mathfrak{t}}[f],\, f\in \mathrm{D} (\mathbf{\mathfrak{t}}),
\end{equation}
where $\mathfrak{a}$ is a quadratic form generated by $\mathcal{A}$ and $\mathfrak{t}[f] = \|\mathcal{T}^{\frac{1}{2}}f\|^{2}_{\mathfrak{H}}.$
 If we consider
the case $0<\theta<\pi/3,$ then it is obvious  that there exist  constants  $b < 1$ and $M > 0$ such that
the following inequality holds
$$
|\mathfrak{a}[f]| \leq b\, \mathfrak{t}[f] +M\|f\|^{2}_{L_{2}},\;
 f\in \mathrm{D}(\mathfrak{t}).
$$
Hence the form $\mathfrak{a}$ is a $\mathcal{T}$ - subordinated form.
In accordance with  the definition given in the paper \cite{firstab_lit:Shkalikov A.} it means $\mathcal{T}$ - subordination of the operator $\mathcal{A}$ in the sense of   form. Assume
that $\forall \varepsilon>0\,\exists b,M>0:\,  b <\varepsilon.$  Using  inequality \eqref{11},  we get
$$
   \frac{1}{2}\mathfrak{t}[f]   \leq \varepsilon \, \mathfrak{t}[f] + M(\varepsilon)\|f\|^{2}_{L_{2} };\; \mathfrak{t}[f]\leq \frac{2 M(\varepsilon)}{(1-2\varepsilon)}\|f\|^{2}_{L_{2} },  \,f\in  \mathrm{D}(\mathfrak{t}),\, \varepsilon < 1/2.
$$
Using the strictly accretive property of the operator $L$ (see inequality  (4.9) \cite{firstab_lit:1kukushkin2018}), we   obtain
$$
\|f\|^{2}_{H^{1}_{0} } \leq C  \mathfrak{t}[f],  \,f\in  \mathrm{D}(\mathfrak{t}).
$$
On the other hand, using results of the paper  \cite{firstab_lit:1kukushkin2018}  it is easy to prove that $H^{1}_{0}(\Omega)\subset\mathrm{D}(\mathfrak{t}).$
Taking into account the facts considered above,   we get
$$
\|f\|_{H^{1}_{0} }   \leq  C \|f\|_{L_{2}},  \,f\in H^{1}_{0}(\Omega)  .
$$
    It can not be! It is well-known fact. This contradition shows us that the form $\mathfrak{a}$ is not   the completely $\mathcal{T}$ - subordinated form. It implies that we can not use     Theorem 8.4 \cite{firstab_lit:Shkalikov A.}.
Note that a reasoning corresponding to  another  trivial representation  of $L$ by  a sum  is analogous.
This rather particular example does not aim to show the inability of using   remarkable
methods considered in the paper  \cite{firstab_lit:Shkalikov A.}  but only creates   prerequisite for  some valuableness of  another
method  based on  using spectral properties of the real component of the  initial operator $L.$ Now we would like to demonstrate effectiveness   of   this method.

\section{ Auxiliary lemmas}

 It was proved in the paper   \cite{firstab_lit:1kukushkin2018}  that the operator $L$ is sectorial. Note that  we used   Theorem 2 \cite{firstab_lit:1kipriyanov1960} to obtain   coordinates  of the sector vertex  $\gamma$ and   values of the sector   semi-angle  $\theta,$ but this theorem is   useful in the general case where  a considered space is the Nicodemus space.
  Note that  we can    improve  estimate (5) \cite{firstab_lit:1kipriyanov1960} in the case corresponding to the values of indexes  $p=2,\,l=1,$ thus  making the estimate   more convenient for finding     values of $\gamma$ and    $\theta.$

\begin{lem}\label{L1} We have the following estimate
 \begin{equation}\label{12}
 \|  \mathfrak{D }^{\alpha}_{0+ }  f \|_{L_{2}}\leq   \mathcal{ K}  \|f\|_{H_{0}^{1}},\,f\in H_{0}^{1}(\Omega),\,\mathcal{ K}>0 .
\end{equation}
\end{lem}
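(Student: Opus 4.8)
The plan is to reduce the multidimensional estimate to a one-dimensional weighted inequality along each ray issuing from $P$ and then integrate over the solid angle. Using the polar decomposition of the norm we have $\|\mathfrak{D}^{\alpha}_{0+}f\|^{2}_{L_{2}}=\int_{\omega}d\chi\int_{0}^{d}|(\mathfrak{D}^{\alpha}_{0+}f)(Q)|^{2}r^{n-1}dr$, and, since $|\partial f/\partial r|=|\vec{\mathbf{e}}\cdot\nabla f|\leq|\nabla f|$, also $\|f\|^{2}_{H^{1}_{0}}\geq\int_{\omega}d\chi\int_{0}^{d}|\phi'(r)|^{2}r^{n-1}dr$, where $\phi(t):=f(P+t\vec{\mathbf{e}})$. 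Hence it suffices to establish, uniformly with respect to the direction $\vec{\mathbf{e}}$, the one-dimensional estimate $\int_{0}^{d}|(\mathfrak{D}^{\alpha}_{0+}f)(Q)|^{2}r^{n-1}dr\leq C\int_{0}^{d}|\phi'(t)|^{2}t^{n-1}dt$; uniformity is harmless because $d=d(\vec{\mathbf{e}})\leq\mathrm{diam}\,\Omega$ and the constants below are monotone in $d$.

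First I would rewrite the directional derivative through the weighted profile $g(t):=t^{n-1}\phi(t)$. A direct computation on the truncated operator shows $(\mathfrak{D}^{\alpha}_{0+,\varepsilon}f)(Q)=r^{1-n}(\mathcal{D}^{\alpha}_{0+,\varepsilon}g)(r)$, where $\mathcal{D}^{\alpha}_{0+,\varepsilon}$ is the ordinary truncated one-dimensional Marchaud derivative; passing to the limit in $L_{2}$ gives $(\mathfrak{D}^{\alpha}_{0+}f)(Q)=r^{1-n}(\mathcal{D}^{\alpha}_{0+}g)(r)$. By the $\mathrm{ACL}$ property $\phi$ is absolutely continuous along almost every ray, and $g(0)=0$ (for $n\geq 2$ because of the factor $t^{n-1}$, and for $n=1$ because $\phi(0)=f(P)=0$). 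Consequently the Marchaud derivative of $g$ coincides with the Riemann--Liouville form, and $(\mathfrak{D}^{\alpha}_{0+}f)(Q)=\dfrac{r^{1-n}}{\Gamma(1-\alpha)}\displaystyle\int_{0}^{r}\dfrac{g'(t)}{(r-t)^{\alpha}}\,dt$, with $g'(t)=(n-1)t^{n-2}\phi(t)+t^{n-1}\phi'(t)$.

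Substituting this representation, the required estimate becomes a weighted norm inequality for the fractional integration operator $T h(r)=\int_{0}^{r}(r-t)^{-\alpha}h(t)\,dt$, namely $\int_{0}^{d}r^{1-n}|Tg'(r)|^{2}dr\leq C\int_{0}^{d}|\phi'(t)|^{2}t^{n-1}dt$. I would split $g'$ into the principal part $t^{n-1}\phi'$ and the lower-order part $(n-1)t^{n-2}\phi$. For the principal part the claim is $\|r^{(1-n)/2}\,T(t^{(n-1)/2}\psi)\|_{L_{2}}\leq C\|\psi\|_{L_{2}}$ with $\psi=t^{(n-1)/2}\phi'$, which I would prove by a weighted Schur test with test function $t^{-1/2}$: a Beta-integral computation bounds both iterated kernel integrals on $(0,d)$, the surplus factor $r^{1-\alpha}$ appearing there being controlled by $(\mathrm{diam}\,\Omega)^{1-\alpha}$ since $1-\alpha>0$. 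For the lower-order part I would first apply the radial Hardy inequality $\int_{0}^{d}|\phi(t)|^{2}t^{n-3}dt\leq C\int_{0}^{d}|\phi'(t)|^{2}t^{n-1}dt$ (valid for $n\geq 3$ thanks to $\phi(0)=0$, and treated by a direct estimate of $\phi(t)=\int_{0}^{t}\phi'$ in the borderline case $n=2$), after which the lower-order term carries the same kernel $r^{(1-n)/2}(r-t)^{-\alpha}t^{(n-1)/2}$ as the principal one and is bounded by the same Schur estimate. The case $n=1$ is trivial: there is no weight, $g=\phi$, and boundedness of $T$ on $L_{2}(0,d)$ follows from Schur's test with a constant test function. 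Integrating the resulting one-dimensional bound over $\omega$ and using $|\phi'|\leq|\nabla f|$ yields \eqref{12}.

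The main obstacle is precisely the weighted fractional-integral inequality in the third step: the Jacobian weight $r^{n-1}$ forces the outer weight $r^{1-n}$, whose balance against the order $1-\alpha$ is delicate, for on the half-line the matched-weight inequality would require $n-1=1-\alpha$, so it is the finiteness of $\mathrm{diam}\,\Omega$ that makes the estimate hold. The accompanying difficulty is the lower-order term $(n-1)t^{n-2}\phi$, for which the Hardy inequality degenerates to its borderline form when $n=2$ and must be handled by hand; everything else reduces to routine Beta-function bookkeeping.
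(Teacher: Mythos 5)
Your proposal takes a genuinely different route from the paper: the paper assumes first $f\in C_{0}^{\infty}(\Omega)$, splits the Kipriyanov form by the triangle inequality into the difference term and the local term $C_{n}^{(\alpha)}f(Q)r^{-\alpha}$, estimates both by the generalized Minkowski inequality over $\Omega$ combined with $f(Q)-f(Q-\vec{\mathbf{e}}t)=\int_{0}^{t}f'(Q-\vec{\mathbf{e}}\tau)d\tau$, Cauchy--Schwarz and Fubini, and then extends to $H_{0}^{1}(\Omega)$ by a density argument using the directional fractional integrals; it never separates the weight $t^{n-1}$ from the function. Your ray-wise reduction is instead based on the (correct) identity $(\mathfrak{D}^{\alpha}_{0+}f)(Q)=r^{1-n}$ times the one-dimensional Marchaud derivative of $g(t)=t^{n-1}\phi(t)$, followed by the splitting $g'=t^{n-1}\phi'+(n-1)t^{n-2}\phi$. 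For the principal part your Schur test does close for every $n\geq 2$ (both iterated integrals are Beta-type and the surplus $r^{1-\alpha}$ is controlled by $(\mathrm{diam}\,\Omega)^{1-\alpha}$, the second one converging because $\alpha+n/2>1$), the Hardy step settles the lower-order part for $n\geq 3$, and the case $n=1$ is indeed trivial.

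The genuine gap is the case $n=2$, which the lemma must cover. There the lower-order part requires
\begin{equation*}
\int\limits_{0}^{d} r^{-1}\left|\,\int\limits_{0}^{r}(r-t)^{-\alpha}\phi(t)\,dt\right|^{2}dr\;\leq\; C\int\limits_{0}^{d}|\phi'(t)|^{2}\,t\,dt,
\end{equation*}
and your proposed proof of it (``a direct estimate of $\phi(t)=\int_{0}^{t}\phi'$'') uses only the condition $\phi(0)=0$. Under that hypothesis alone the inequality is false: take $\phi'(t)=1/t$ for $\epsilon<t<a$ and $\phi'(t)=0$ otherwise, so that the right-hand side equals $h:=\ln(a/\epsilon)$ while $\phi\equiv h$ on $[a,d]$; then for $r\in(2a,d)$ the inner integral is at least $h\,(r/2)^{1-\alpha}/(1-\alpha)$, hence the left-hand side is at least $c(a,d,\alpha)\,h^{2}$, and the ratio blows up like $h$ as $\epsilon\rightarrow 0$. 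Since your own Schur bound controls the principal part of this same family by $C\sqrt{h}$, the example defeats not just your proof of the lower-order estimate but the entire one-dimensional inequality you reduced the lemma to: with only $\phi(0)=0$ it is simply not true for $n=2$. What rescues the lemma is the second boundary condition, $\phi(d(\vec{\mathbf{e}}))=0$ for almost every ray, which your argument never invokes. With it one writes $\phi(r)=-\int_{r}^{d}\phi'(s)\,ds$, so that Cauchy--Schwarz gives $|\phi(r)|^{2}\leq \ln(d/r)\int_{0}^{d}|\phi'(s)|^{2}s\,ds$, and then $\int_{0}^{d}r^{1-2\alpha}\bigl(1+\ln(d/r)\bigr)\,dr<\infty$ closes the lower-order term. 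Until a step of this kind is inserted, your proof establishes the estimate only for $n=1$ and $n\geq 3$.
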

\begin{proof}
 First  assume that   $f\in C_{0}^{\infty}(\Omega),$ then in accordance with Lemma 2.5   \cite{firstab_lit:1kukushkin2018}, we have $\mathfrak{D }^{\alpha}_{0+ }  f =\mathfrak{D }^{\alpha}   f.$ Applying the  triangle inequality, we get
$$
\| \mathfrak{D }^{\alpha}  f \|_{L_{2} }\leq\frac{\alpha}{\Gamma(1-\alpha)}\left(\int\limits_{\Omega} \left|\int\limits_{0 }^{r}\frac{   f (Q)-
 f (P+\vec{\mathbf{e}}t )}{ ( r-t)^{\alpha +1}} dt\right|^{2} dQ    \right)^{\frac{1}{2}}
$$
$$
 +C_{n}^{(\alpha)}\left(\int\limits_{\Omega}\left|  f(Q)r^{-\alpha}   \right|^{2}dQ    \right)^{\frac{1}{2}}=\frac{\alpha}{\Gamma(1-\alpha)} I_1 +C_{n}^{(\alpha)}I_2 .
$$
Making the change of  variables,  applying the  generalized Minkowskii  inequality, we get
$$
I_1  =
    \left(\int\limits_{\Omega}\left|\int\limits_{0}^{ r}\frac{  \left[f (Q)-  f (Q-\vec{\mathbf{e}}t)\right] }{ t^{\alpha +1}} dt\right|^{2} dQ    \right)^{\frac{1}{2}}
\leq   \int\limits_{ 0}^{\delta}t^{-\alpha -1}\!\!\left(\int\limits_{\Omega} \left| f(Q)-f(Q-\vec{\mathbf{e}}t) \right|^{2}  dQ   \right)^{\frac{1}{2}} dt,
 $$
where $\delta={\rm diam}\, \Omega.$
Let us rewrite the previous inequality as follows
$$
I_{1}\leq   \int\limits_{ 0}^{ \delta}t^{-\alpha -1}\!\!\left(\int\limits_{\Omega} \left| \int\limits_{0}^{t} f' (Q-\vec{\mathbf{e}}\tau) d\tau\right|^{2}   dQ\right)^{\frac{1}{2}}\!\!dt,\;
 f' (Q ):=\lim\limits_{t\rightarrow 0}\frac{ f(Q-\vec{\mathbf{e}}t)-f(Q)}{t}\,.
$$
Using  the Cauchy-Schwarz inequality with respect to the inner integral, applying  the  Fubini   theorem, taking  into account the fact   that the function $f$   vanishes  outside of    $\Omega,$ we obtain
$$
I_{ 1} \leq     \int\limits_{ 0}^{ \delta}t^{-\alpha -1}\!\!\left(\int\limits_{\Omega}dQ \int\limits_{0}^{t} \left|f'  (Q-\vec{\mathbf{e}}\tau)\right|^{2}d\tau \int\limits_{0}^{t}d\tau    \right)^{\frac{1}{2}} dt
$$
$$
 =   \int\limits_{ 0}^{ \delta}t^{-\alpha -1/2 }\left( \int\limits_{0}^{t}d\tau \int\limits_{\Omega}\left|f'  (Q-\vec{\mathbf{e}}\tau)\right|^{2} dQ   \right)^{\frac{1}{2}} dt\leq  \frac{\delta ^{1-\alpha}  }{ 1-\alpha  }  \,  \|f' \|_{L_{2} }.
$$
Arguing as above, we get
$$
   I_2  =
  \left(\int\limits_{\Omega}   r  ^{-2\alpha  } \left|\int\limits_{0}^{r} f' (Q-\vec{\mathbf{e}}t)\,dt\right|^{2}\!\!\!dQ     \right)^{\frac{1}{2}}\leq \left\{\int\limits_{\Omega}     \left(\int\limits_{0}^{ r} | f' (Q-\vec{\mathbf{e}}t) | r  ^{ -\alpha  }\,dt\right)^{2}\!\!\!dQ     \right\}^{\frac{1}{2}}
$$

$$
 \leq  \left\{\int\limits_{\Omega}    \left(\int\limits_{0}^{r} |f' (Q-\vec{\mathbf{e}}t)|t^{-\alpha}dt\right)^{2}\!\!\!dQ     \right\}^{\frac{1}{2}}\leq
  \left\{\int\limits_{\Omega}    \left(\int\limits_{0}^{\delta} |f' (Q-\vec{\mathbf{e}}t)|t^{-\alpha}dt\right)^{2}\!\!\!dQ     \right\}^{\frac{1}{2}} .
 $$
 Applying  the  generalized Minkowski inequality,  we get
$$
I_{2}\leq
\int\limits_{0 }^{ \delta}\!\! t^{- \alpha}  \left(\int\limits_{\Omega} \left| f'  (Q+\vec{\mathbf{e}}t)\right|^{2}  dQ      \right)^{\!\!\frac{1}{2}}\!\!\!dt
\leq \frac{  \delta ^{1-\alpha}}{    1-\alpha  \,}\, \| f'\|_{L_{2}}.
$$
Finally, we obtain
\begin{equation}\label{13}
\| \mathfrak{D}^{\alpha}_{0+}  f \|_{L_{2}}\leq C\| f'\|_{L_{2}},   \;f\in C_{0}^{\infty}(\Omega).
\end{equation}
  Since $|f'(Q)|=|\nabla f \cdot \vec{\mathbf{e}}|\leq  |\nabla f |,$ then  it is easy to prove that
$
\| f'\|_{L^{  }_{2}}\leq   \|f\|_{H^{ 1 }_{0}},
   f\in C_{0}^{\infty}(\Omega).
$
Hence
\begin{equation}\label{14}
 \| \mathfrak{D}^{\alpha}_{0+}  f \|_{L_{2}}\leq C\|f\|_{H^{1}_{0}},   \;f\in C_{0}^{\infty}(\Omega).
\end{equation}
Now assume that  $f\in  H_{0}^{1}(\Omega),$ then there  exists the sequence $\{f_{n}\}\in C_{0}^{\infty}(\Omega): f_{n}\stackrel{H^{1}_{0}}{\longrightarrow}f.$ Applying   inequality \eqref{14}, it is not hard to prove that   the sequence
$\{\mathfrak{D}^{\alpha}_{0+}  f_{n}\}$ is fundamental with respect to the norm of the space  $L_{2}(\Omega).$   Hence there  exists a limit   $\mathfrak{D}^{\alpha}_{0+}  f_{n}\stackrel{L_{2}}{\longrightarrow}\varphi.$ By virtue of the continuity property of the
 directional fractional integral  (see Theorem 2.1 \cite{firstab_lit:1kukushkin2018}),   we get
$ \mathfrak{I}_{0+}^{\alpha}\mathfrak{D}^{\alpha}_{0+}  f_{n}\stackrel{L_{2}}{\longrightarrow}\mathfrak{I}_{0+}^{\alpha}\varphi.$ On the other hand, the application  of   Theorem 2.4 \cite{firstab_lit:1kukushkin2018}, Theorem 2.3 \cite{firstab_lit:1kukushkin2018} yields   $\mathfrak{I}_{0+}^{\alpha}\mathfrak{D}^{\alpha}_{0+} f_{n}=  f_{n},\,n\in \mathbb{N}.$ Therefore  $  f_{n}\stackrel{L_{2}}{\longrightarrow}\mathfrak{I}_{0+}^{\alpha}\varphi.$ Since   $  f_{n}\stackrel{L_{2}}{\longrightarrow}  f,$  then by virtue of the uniqueness property of  limit,  we obtain  $  f=\mathfrak{I}_{0+}^{\alpha}\varphi.$   Applying Theorem 2.3 \cite{firstab_lit:1kukushkin2018}, we get $\mathfrak{D}^{\alpha}_{0+}  f=\varphi.$   In accordance with said above, we have  $\mathfrak{D}^{\alpha}_{0+}  f_{n}\stackrel{L_{2}}{\longrightarrow}\mathfrak{D}^{\alpha}_{0+}  f. $   Passing to the limit on the left and   right side of   inequality
\eqref{14}   we complete   the proof.
\end{proof}
\begin{lem}\label{L2}
 The   numerical range   $\Theta(L)$  belongs to a sector $\mathfrak{L}_{\gamma}(\theta),$
 where
 \begin{equation}\label{15}
\gamma=  \eta^{-2} -   a\frac{ \mathcal{I} }{2}\left(\frac{ \mathcal{I}}{2} \,\varepsilon^{2}+ \mathcal{A}\,\varepsilon\right)^{\!\!\!-1}  \!\!,\;\theta=\arctan  \left(   \frac{\mathcal{I}}{2a}\,\varepsilon     + \frac{\mathcal{A}}{a} \right)    ,\, \varepsilon\in(0,\infty).
\end{equation}
The parameter $\varepsilon$ can be chosen so that
\begin{equation}\label{16}
\gamma:=\left\{ \begin{aligned}
    \gamma<0,\;   \varepsilon\in(0,\xi) ,\\
 \gamma\geq 0,\;\varepsilon\in[\xi,\infty) \\
\end{aligned}
 \right.,\;\;\xi=\sqrt{\left(\frac{\mathcal{A}}{ \mathcal{I}}\right)^{2}+  a \eta^{2}   } -\frac{\mathcal{A}}{ \mathcal{I}}.
\end{equation}
In  the   case   $(\gamma=0),$   we have the following value of the semi-angle
 \begin{equation}\label{17}
 \theta  = \arctan\left\{ \sqrt{\left(\frac{\mathcal{A}}{ 2a}\right)^{2}+ \frac{   \mathcal{I}^{\,2}\eta^{2}}{4a }} +\frac{ \mathcal{A}}{2a}\right\}, \,\mathcal{I}=\mathcal{P}\mathcal{K}.
\end{equation}
\end{lem}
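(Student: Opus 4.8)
The plan is to estimate the real and imaginary parts of $(Lf,f)_{L_{2}}$ separately over $f\in \mathrm{D}(L)$ with $\|f\|_{L_{2}}=1$, and then to convert these two one-sided bounds into the sector condition $|\mathrm{Im}(Lf,f)_{L_{2}}|\leq \tan\theta\,(\mathrm{Re}(Lf,f)_{L_{2}}-\gamma)$. First I would integrate by parts in the leading term, using $f\in H^{1}_{0}(\Omega)$, to write $(Lf,f)_{L_{2}}=(a^{ij}D_{i}f,D_{j}f)_{L_{2}}+(\rho\,\mathfrak{D}^{\alpha} f,f)_{L_{2}}$. For the real part I would simply combine the coercivity estimates already available: \eqref{5} gives $-\mathrm{Re}(D_{j}[a^{ij}D_{i}f],f)_{L_{2}}\geq a\|f\|^{2}_{H^{1}_{0}}$ and \eqref{4} gives $\mathrm{Re}(\rho\,\mathfrak{D}^{\alpha} f,f)_{L_{2}}\geq \eta^{-2}\|f\|^{2}_{L_{2}}$, whence $\mathrm{Re}(Lf,f)_{L_{2}}\geq a\,\|f\|^{2}_{H^{1}_{0}}+\eta^{-2}$.

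The imaginary part is where the two constants $\mathcal{A}$ and $\mathcal{I}$ enter. For the leading term a crude pointwise Cauchy--Schwarz inequality in the indices, $|\sum_{i,j}a^{ij}D_{i}f\,\overline{D_{j}f}|\leq (\sum_{i,j}|a^{ij}|^{2})^{1/2}\sum_{i}|D_{i}f|^{2}$, integrated over $\Omega$ and combined with the definition of $\mathcal{A}$ in \eqref{3}, yields $|\mathrm{Im}(a^{ij}D_{i}f,D_{j}f)_{L_{2}}|\leq \mathcal{A}\|f\|^{2}_{H^{1}_{0}}$. For the fractional term I would bound $|\mathrm{Im}(\rho\,\mathfrak{D}^{\alpha} f,f)_{L_{2}}|\leq \mathcal{P}\,\|\mathfrak{D}^{\alpha} f\|_{L_{2}}\|f\|_{L_{2}}$ and invoke Lemma \ref{L1}, extended from $C_{0}^{\infty}(\Omega)$ to $H^{1}_{0}(\Omega)$ by the density argument already carried out in its proof (so that $\mathfrak{D}^{\alpha}=\mathfrak{D}^{\alpha}_{0+}$ applies), to get $\|\mathfrak{D}^{\alpha} f\|_{L_{2}}\leq \mathcal{K}\|f\|_{H^{1}_{0}}$. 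Writing $\mathcal{I}=\mathcal{P}\mathcal{K}$ this gives $|\mathrm{Im}(Lf,f)_{L_{2}}|\leq \mathcal{A}\|f\|^{2}_{H^{1}_{0}}+\mathcal{I}\|f\|_{H^{1}_{0}}$.

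To finish I set $x=\|f\|^{2}_{H^{1}_{0}}$ and decouple the mixed term by the elementary inequality $\mathcal{I}\sqrt{x}\leq \tfrac{\mathcal{I}}{2}(\varepsilon x+\varepsilon^{-1})$, valid for every $\varepsilon>0$, obtaining $|\mathrm{Im}(Lf,f)_{L_{2}}|\leq (\mathcal{A}+\tfrac{\mathcal{I}}{2}\varepsilon)x+\tfrac{\mathcal{I}}{2\varepsilon}$. Choosing the semi-angle through $\tan\theta=\tfrac{\mathcal{A}}{a}+\tfrac{\mathcal{I}}{2a}\varepsilon$, exactly as in \eqref{15}, the coefficient of $x$ becomes $a\tan\theta$, so that $|\mathrm{Im}(Lf,f)_{L_{2}}|\leq \tan\theta\,(ax+\eta^{-2}-\gamma)$ precisely when $\gamma=\eta^{-2}-\tfrac{\mathcal{I}}{2\varepsilon}\,(\tan\theta)^{-1}$, and a short simplification reproduces the expression for $\gamma$ in \eqref{15}. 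Since $\mathrm{Re}(Lf,f)_{L_{2}}\geq ax+\eta^{-2}$ and $\tan\theta\geq 0$, monotonicity upgrades this to $|\mathrm{Im}(Lf,f)_{L_{2}}|\leq \tan\theta\,(\mathrm{Re}(Lf,f)_{L_{2}}-\gamma)$, i.e. $\Theta(L)\subset\mathfrak{L}_{\gamma}(\theta)$.

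The remaining assertions are pure calculus in the free parameter $\varepsilon$. The quantity subtracted from $\eta^{-2}$ in $\gamma$ is decreasing in $\varepsilon$, so $\gamma(\varepsilon)$ is increasing; solving the quadratic $\mathcal{I}\varepsilon^{2}+2\mathcal{A}\varepsilon-a\mathcal{I}\eta^{2}=0$ for its positive root gives the threshold $\xi$ of \eqref{16} and the sign dichotomy there, while substituting $\varepsilon=\xi$ into the formula for $\tan\theta$ and simplifying the surd yields \eqref{17}. I expect the only genuinely delicate points to be the imaginary-part estimate for the leading term and the justification that Lemma \ref{L1} may be applied to $\mathfrak{D}^{\alpha}$ on all of $H^{1}_{0}(\Omega)$; the rest is the bookkeeping needed to match the explicit constants, which is routine once the decoupling parameter $\varepsilon$ has been introduced.
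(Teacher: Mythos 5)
Your proposal is correct and follows essentially the same route as the paper's own proof: the same coercivity bounds \eqref{4}--\eqref{5} for the real part, the same constants $\mathcal{A}$ (elliptic term) and $\mathcal{I}=\mathcal{P}\mathcal{K}$ via Lemma \ref{L1} (fractional term) for the imaginary part, the same Young-type decoupling in the parameter $\varepsilon$, and the same quadratic equation in $\varepsilon$ yielding \eqref{16} and \eqref{17}. The only cosmetic difference is that the paper splits $f=u+iv$ and estimates the resulting cross terms, whereas you work with complex $f$ directly and normalize $\|f\|_{L_{2}}=1$; your choice $\tan\theta=\left(\mathcal{A}+\tfrac{\mathcal{I}}{2}\varepsilon\right)/a$ is exactly the paper's $b(\varepsilon)=1/\tan\theta$, so the two arguments coincide.
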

 \begin{proof}
We have the following estimate
$$
\left|{\rm Im} ( f,Lf    )_{L_{2}}\right|\leq   \left|\int\limits_{\Omega}
\left(a^{ij}  D_{i}u D_{j}v-a^{ij}  D_{i}v D_{j}u\right)dQ\right|
$$
$$
+\left|  ( u,\rho \,\mathfrak{D}^{\alpha}v )_{L_{2}} -( v,\rho\,\mathfrak{D}^{\alpha}u   )_{L_{2}}\right|= I_{1}+I_{2},
$$
where $f\in \mathrm{D}(L),\,f=u+iv.$
 Using the Cauchy-Schwarz inequalities  for    sums and   integrals, applying  the Jung  inequality, taking into account \eqref{3}, we get
\begin{equation}\label{18}
 \left|\int\limits_{\Omega}
 \!\!a^{ij}  D_{i}u D_{j}v dQ\right|\! \leq    \mathcal{A}\!\int\limits_{\Omega}
  |\nabla u|\,|\nabla v|\, dQ\leq \mathcal{A}
  \| u\|_{H_{0}^{1} }\| v\|_{H_{0}^{1}}\!\leq\!\frac{\mathcal{A}}{2}\left( \| u\|^{2}_{H_{0}^{1}}+\| v\|^{2}_{H_{0}^{1}}\right).
 \end{equation}
It follows   that
$
 I_{1}\!\leq  \! \mathcal{A}\|f\|^{2}_{H_{0}^{1}}.
$
Using  Lemma \ref{L1}, the  Jung  inequality, we obtain
\begin{equation}\label{19}
\left|( u,\rho\,\mathfrak{D}^{\alpha}v )_{L_{2} } \right|
  \leq \mathcal{I} \, \|u\|_{L_{2} }\| v\|_{H_{0}^{1}}\leq
 \frac{ \mathcal{I} }{2}\left (\frac{1}{\varepsilon}\|u\|^{2}_{L_{2} }+\varepsilon\| v\|^{2}_{H_{0}^{1}} \right);
 $$
 $$
\left|( v,\rho\,\mathfrak{D}^{\alpha}u )_{L_{2} } \right|\leq \frac{ \mathcal{I} }{2}\left (\frac{1}{\varepsilon}\|v\|^{2}_{L_{2} }+\varepsilon\| u\|^{2}_{H_{0}^{1}} \right) .
\end{equation}
 Hence
$$
 I_2   \leq \left|( u,\rho\,\mathfrak{D}^{\alpha}v )_{L_{2} } \right| +\left|( v,\rho\,\mathfrak{D}^{\alpha}u )_{L_{2} } \right|\leq \frac{ \mathcal{I} }{2}\left (\frac{1}{\varepsilon}\|f\|^{2}_{L_{2} }+\varepsilon\| f\|^{2}_{H_{0}^{1}} \right).
 $$
  Finally   we have the following estimate
$$
\left|{\rm Im} ( f,Lf    )_{L_{2} }\right|\leq \frac{\mathcal{I}  }{2}\,  \varepsilon^{-1} \|f\|^{2}_{L_{2} }+\left(\frac{\mathcal{I}  }{2} \,\varepsilon+ \mathcal{A}\right)\| f\|^{2}_{H_{0}^{1} }  .
 $$
Thus, taking  into account  \eqref{4},\eqref{5}, we obtain
$$
{\rm Re}( f,Lf    )_{L_{2} }-b \left|{\rm Im} ( f,Lf    )_{L_{2} }\right|\geq
 \left[a- b\left(\frac{ \mathcal{I} }{2} \,\varepsilon+ \mathcal{A}\right)\right] \|  f\|^{2}_{H_{0}^{1} }+
 \left(  \eta^{-2} -   \frac{ \mathcal{I}}{2}\, b \, \varepsilon^{-1}  \right)\|f\|^{2}_{L_{2} },\,b>0.
$$
Choosing a value of the parameter $b$ accordingly,  we get
\begin{equation}\label{20}
\left|{\rm Im} ( f,[L-\gamma] f    )_{L_{2} }\right|   \leq  \frac{1}{b(\varepsilon) } \,{\rm Re}( f,[L-\gamma]f    )_{L_{2} }  ,\;b(\varepsilon) = a\left(\frac{ \mathcal{I}}{2} \,\varepsilon+ \mathcal{A}\right)^{-1}\!\!\!\!,
$$
$$
 \gamma= \eta^{-2} -   \frac{ \mathcal{I}}{2}\, b(\varepsilon) \, \varepsilon^{-1}.
\end{equation}
Taking into account that  $\Theta(L)=\left\{\zeta:\, \zeta=\omega+\gamma,\,\omega\in \Theta(L-\gamma)\right\}, $ we conclude  that      $\Theta(L)\subset \mathfrak{L}_{\gamma}(\theta),$ where $\theta=\arctan  \{1 /b(\varepsilon)\} .$   Relation \eqref{15} is proved.
   Solving system of equations  \eqref{15} relative to $\varepsilon,$ we obtain  the positive root $\xi$ corresponding to  the value $\gamma=0.$ Also, we obtain     description \eqref{16}  for  coordinates of  a   sector vertex $\gamma.$
Let us consider in detail  the   case   $(\gamma=0).$   In this case $\varepsilon=\xi,$ hence $b (\xi)= a\left(  \xi\mathcal{I}/2+ \mathcal{A}\right)^{-1}.$ Using  the   expression for $\xi,$ by  easy calculation,  we get
\eqref{17}.
\end{proof}
\begin{rem}
Consider   the   sector $\mathfrak{L}_{0}(\theta),\,\theta=\arctan \{1/b(\xi)\}.$ Combining Theorem 4.3  \cite{firstab_lit:1kukushkin2018} with    Theorem 3.2 \cite[p.268]{firstab_lit:kato1980},
  we obtain     $0\,\cup(\mathbb{C}\setminus  \mathfrak{ L }_{0})\subset \mathrm{P}(L).$ Using
   Lemma \ref{L1}  it is not hard to prove that $\Theta(R_{L} )\subset \mathfrak{L}_{0}(\theta).$
\end{rem}

\section{Main results}
The following lemma is a central point of the method based on  properties of  the real component.

\begin{lem}\label{L3}
The following relation  holds
\begin{equation*}
\lambda_{i}\left[(R_{L})_{\mathfrak{R}}\right]\asymp
 i^{-2/n} .
\end{equation*}
 \end{lem}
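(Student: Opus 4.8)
The plan is to reduce the claim to two independent facts: first, that the real part $V=(R_{L})_{\mathfrak{R}}$ of the resolvent is comparable, in the operator order, to the resolvent $R_{H}$ of the real part $H=L_{\mathfrak{R}}$; and second, that $\lambda_{i}(R_{H})\asymp i^{-2/n}$ is essentially the Weyl law for a second order elliptic operator. Throughout I use that $H$ is selfadjoint and positive with $H\geq C>0$ by \eqref{8}, that $0\in\mathrm{P}(L)$ by the Remark, that $V$ and $R_{H}$ are compact positive selfadjoint operators, and that $R^{\ast}_{L}=R_{L^{\ast}}$, so that $V=\frac{1}{2}\big(L^{-1}+(L^{\ast})^{-1}\big)$.

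First I would exploit the m-sectorial structure of $L$. Since $L$ is m-sectorial with vertex at the origin and semi-angle $\theta$ (the sector $\mathfrak{L}_{0}(\theta)$ of the Remark), the representation theory of sectorial sesquilinear forms \cite{firstab_lit:kato1980} yields a bounded selfadjoint operator $B$ with $\|B\|\leq\tan\theta$ (the bound coming directly from the sectorial estimate $|\mathrm{Im}(Lf,f)|\leq\tan\theta\,\mathrm{Re}(Lf,f)$) such that $L=H^{1/2}(I+iB)H^{1/2}$. Inverting and passing to adjoints I obtain $L^{-1}=H^{-1/2}(I+iB)^{-1}H^{-1/2}$ and $(L^{\ast})^{-1}=H^{-1/2}(I-iB)^{-1}H^{-1/2}$. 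Adding and using the elementary identity $(I+iB)^{-1}+(I-iB)^{-1}=2(I+B^{2})^{-1}$ (valid since $I\pm iB$ are commuting functions of $B$), I arrive at the key representation
\begin{equation*}
V=(R_{L})_{\mathfrak{R}}=H^{-1/2}(I+B^{2})^{-1}H^{-1/2}.
\end{equation*}

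Next, since $0\leq B^{2}\leq\tan^{2}\theta\cdot I$, the bounded positive factor is squeezed as $\cos^{2}\theta\cdot I\leq(I+B^{2})^{-1}\leq I$, whence, conjugating by $H^{-1/2}$ and using $(R_{H}g,g)=\|H^{-1/2}g\|^{2}$, we get in the operator order
\begin{equation*}
\cos^{2}\theta\cdot R_{H}\leq V\leq R_{H}.
\end{equation*}
The Weyl monotonicity (min-max) principle for eigenvalues of compact positive selfadjoint operators then gives $\cos^{2}\theta\,\lambda_{i}(R_{H})\leq\lambda_{i}(V)\leq\lambda_{i}(R_{H})$, that is $\lambda_{i}(V)\asymp\lambda_{i}(R_{H})$.

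It remains to show $\lambda_{i}(R_{H})\asymp i^{-2/n}$, equivalently $\lambda_{i}(H)\asymp i^{2/n}$. Here I would compare the quadratic form of $H$ with that of the Dirichlet Laplacian. By \eqref{5} the form is bounded below by $a\|u\|^{2}_{H^{1}_{0}}$, while the ellipticity bound on $a^{ij}$ together with Lemma \ref{L1} and the Poincar\'e inequality $\|u\|_{L_{2}}\leq C\|u\|_{H^{1}_{0}}$ bounds it above by $C\|u\|^{2}_{H^{1}_{0}}$; thus the form of $H$ is equivalent to $\|\cdot\|^{2}_{H^{1}_{0}}$ on the common form domain $H^{1}_{0}(\Omega)$. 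By the min-max principle this forces $\lambda_{i}(H)\asymp\lambda_{i}(-\Delta)$, where $-\Delta$ is the Dirichlet Laplacian on the bounded domain $\Omega$, and the classical Weyl asymptotics give $\lambda_{i}(-\Delta)\asymp i^{2/n}$. Inverting yields $\lambda_{i}(R_{H})\asymp i^{-2/n}$, and combined with the previous paragraph this completes the proof. I expect the main obstacle to be the rigorous justification of the factorization $L=H^{1/2}(I+iB)H^{1/2}$ and of the ensuing operator identity for $V$ — in particular the coincidence of form and operator domains and the boundedness and selfadjointness of $B$ — which rests on the m-sectorial and m-accretive properties established earlier; the analytic input (Lemma \ref{L1}) and the Weyl law are then comparatively routine.
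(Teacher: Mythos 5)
Your proposal is correct and follows essentially the same route as the paper: Kato's representation theorem giving $L=H^{1/2}(I+iB)H^{1/2}$, hence $V=(R_{L})_{\mathfrak{R}}=H^{-1/2}(I+B^{2})^{-1}H^{-1/2}$ and $\lambda_{i}(V)\asymp\lambda_{i}(R_{H})$, followed by a quadratic-form comparison of $H$ with (shifted, scaled) Dirichlet Laplacians and the Weyl law to get $\lambda_{i}(H)\asymp i^{2/n}$. Your two shortcuts are legitimate streamlinings of the paper's longer arguments: you obtain the adjoint factorization by simply taking adjoints of the bounded operator $R_{L}$, where the paper introduces two operators $B_{1},B_{2}$ and proves $B_{1}=-B_{2}$ at length, and you use the operator-order squeeze $\cos^{2}\theta\, R_{H}\leq V\leq R_{H}$ where the paper uses cruder norm estimates involving $S=I+B^{2}$.
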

\begin{proof}
  First  consider the next operators with   positive coefficients
$$
Y_{k}f=- \mu_{k}  \Delta f  +\sigma_{k}f,\;f\in \mathrm{D}(L),
$$
$$
  \mu_{k},\sigma_{k}={\rm const},\;k=0,1.
$$
It is well known fact that $Y^{\ast}_{k}=Y _{k},\,\Theta(Y _{k})\subset\{z\in \mathbb{R}:z>C\}.$
  Choose the coefficients  so that  $\mu_{0}=a, \sigma_{ 0 } = \eta^{-2},\mu_{1}=\mathcal{A}+ \mathcal{I}/2,\sigma_{1}= \mathcal{I}/2,$ then using \eqref{4},\eqref{5},\eqref{18},\eqref{19},  we obtain
\begin{equation}\label{21}
 (Y_{0}f,f)_{L_{2}} \leq(Hf,f)_{L_{2}}  \leq (Y_{1}f,f)_{L_{2}},\,f\in \mathrm{D}(L ),
\end{equation}
where $H:=L_{\mathfrak{R}}.$
 It is easy to prove that the energetic spaces   $\mathfrak{H}_{Y_{0}},\mathfrak{H}_{Y_{1}}$ coincide with the space $H_{0}^{1}(\Omega)$ as  sets of elements. Hence by virtue of inequality \eqref{21} it can easily be checked that the energetic space $\mathfrak{H}_{H}$ coincides with the space $H_{0}^{1}(\Omega)$ as a set of elements. Also it is clear that $(Y_{0}f,f)_{L_{2}}\geq C\|f\|_{H_{0}^{1}},\,f\in H_{0}^{1}(\Omega).$
 Hence due to the Rellich-Kondrashov theorem the considered above  energetic spaces   are compactly embedded    in  $L_{2}(\Omega).$    Applying    Theorem 1 \cite[p.225]{firstab_lit:mihlin1970}, we get
\begin{equation}\label{22}
 \lambda_{i}( Y_{0} )\leq \lambda_{i}(H)\leq\lambda_{i}( Y_{1}),\; i\in \mathbb{N}.
\end{equation}
Let us    use   asymptotic formula (3) \cite{firstab_lit:fedosov1964} for the counting function of the Laplace operator
\begin{equation} \label{23}
N_{-\Delta}(\lambda)=\frac{ {\rm mes \Omega}}{2^{n}\pi^{n/2}\Gamma(n/2+1)}\lambda^{n/2} +\mathcal{O} (\lambda^{ (n-1)/2}\ln\sqrt{\lambda}),\;\lambda\rightarrow \infty.
\end{equation}
Applying  formula (5.12) \cite{firstab_lit:2Agranovich2011}, we get
\begin{equation} \label{24}
 \lambda_{i}(-\Delta) =4\pi \left[\frac{\Gamma(n/2+1)}{ {\rm mes \Omega}}\right]^{2/n}i^{2/n} +\beta(i^{ 2/n}),\;\beta(i^{ 2/n})= o  (i^{ 2/n} ),\;i\rightarrow \infty.
\end{equation}
 It implies that
\begin{equation*}
 \lambda_{i}(Y_{k}) = \mathcal{E}_{k}i^{2/n} +  \phi_{k}  (i^{ 2/n} ),\;i\in \mathbb{N},\;
 $$
 $$
 \mathcal{E}_{k}=4\pi \mu_{k} \left[\frac{\Gamma(n/2+1)}{{\rm mes \Omega}}\right]^{2/n}\!\!\!,\;\;\; \phi_{k}  (i^{ 2/n} )=\beta(i^{ 2/n})\,\mu_{k}+\sigma_{k},\,\;\;k=0,1.
\end{equation*}
Applying   estimate \eqref{22}, we get
\begin{equation}\label{25}
 \mathcal{E}_{0}  i^{2/n}+ \phi_{\,0}  (i^{2/n})\leq \lambda_{i}(H) \leq   \mathcal{E}_{1} i^{2/n}+ \phi _{1} (i^{2/n})  ,\,i\in\mathbb{N}.
\end{equation}
Now we shall  show that the following relation  holds
\begin{equation}\label{26}
\lambda_{i}\left[(R_{L})_{\mathfrak{R}}\right]\asymp
 \lambda_{i}(R_{H}).
\end{equation}
 It is proved in Theorem 5.4 that  $H=ReL.$   By virtue of Theorem 4.3 \cite{firstab_lit:1kukushkin2018}, we know that  the operator   $L$ is m-sectorial, moreover due to Lemma \ref{L1} we have $\Theta(L)\subset \mathfrak{L}_{0}(\theta).$  Hence  using    Theorem 3.2 \cite[p.337]{firstab_lit:kato1980} we have the following representation
 \begin{equation}\label{27}
L=H^{\frac{1}{2}}(I+i B_{1}) H^{\frac{1}{2}},\;L^{\ast}=H^{\frac{1}{2}}(I+i B_{2}) H^{\frac{1}{2}},
\end{equation}
where   $B_{i}:=\left\{B_{i}:L_{2}(\Omega)\rightarrow L_{2}(\Omega),\,B^{\ast}_{i}=B_{i},\;\|B_{i}\|\leq \tan \theta \right\},\,i=1,2.$

Since the set of  linear operators  generates  ring (algebraic structure),  then we obtain
\begin{equation*}
  Hf =\frac{1}{2}\left[H^{\frac{1}{2}}(I+i B_{1})  +H^{\frac{1}{2}}(I+i B_{2})\right]H^{\frac{1}{2}}  = \frac{1}{2}\left\{H^{\frac{1}{2}}\left[(I+i B_{1})  + (I+i B_{2})\right]\right\}H^{\frac{1}{2}}
$$
$$
=  H f +
 \frac{i}{2} H^{\frac{1}{2}}\left(B_{1}+B_{2}\right)  H^{\frac{1}{2}}f  ,\;f\in \mathrm{D}(L).
\end{equation*}
 Consequently
\begin{equation}\label{28}
H^{\frac{1}{2}}\left(B_{1}+B_{2}\right) H^{\frac{1}{2}}f=0  ,\;f\in \mathrm{D}(L).
\end{equation}
Let us show that $B_{1}=-B_{2}.$
Since  the operator $H$ is m-accretive, then we have
$
(H+\zeta)^{-1}\in  \mathcal{B }(L_{2}),\,\mathrm{Re}\,\zeta>0.
$
Using this fact, we get
\begin{equation}\label{29}
{\rm Re}\left([H+\zeta]^{-1}Hf,f\right)_{L_{2}}={\rm Re}\left([H+\zeta]^{-1}[H+\zeta]  f,f\right)_{L_{2}}-{\rm Re}\left(\zeta\,[H+\zeta]^{-1}   f,f\right)_{L_{2}}
$$
$$
\geq \|f\|^{2}_{L_{2}}-|\zeta|\cdot\|(H+\zeta)^{-1}\|\cdot\|f\|^{2}_{L_{2}}=\|f\|^{2}_{L_{2}} \left(1-|\zeta|\cdot \|(H+\zeta)^{-1}\|\right),
$$
$$
\,\mathrm{Re}\,\zeta>0,\,f\in \mathrm{D}(L).
\end{equation}
Applying    inequality \eqref{8},   we obtain
$$
\| f\| _{L_{2}}\|(H+\zeta)^{-1}f\| _{L_{2}}\geq|(f,[H+\zeta]^{-1}f)|
 \geq ( {\rm Re} \zeta+C_{0} )\|(H+\zeta)^{-1}f\|^{2}_{L_{2}},\;f\in L_{2}(\Omega).
$$
It implies that
$$
\|(H+\zeta)^{-1}\|\leq ({\rm Re}\zeta +C_{0}  )^{-1},\;{\rm Re}\zeta>0.
$$
  Combining    this  estimate  and  \eqref{29}, we have
$$
{\rm Re}\left([H+\zeta]^{-1}Hf,f\right)_{L_{2}}\geq \|f\|_{L_{2}}^{2}\left(1-\frac{|\zeta|}{{\rm Re}\zeta +C_{0}}\right),\,{\rm Re}\zeta>0,\,f\in \mathrm{D}(L).
$$
Applying    formula (3.45) \cite[p.282]{firstab_lit:kato1980} and  taking into account that $H^{\frac{1}{2}}$ is selfadjoint, we get
\begin{equation}\label{30}
 \left(H^{\frac{1}{2} }f,f\right)_{L_{2}}=\frac{1}{\pi}\int\limits_{0}^{\infty}\zeta^{-1/2} \left([H+\zeta]^{-1}Hf,f\right)_{L_{2}}d\zeta
$$
$$
 \geq\|f\|^{2}_{L_{2}}\cdot \frac{C_{0}}{\pi }\int\limits_{0}^{\infty}\frac{\zeta^{-1/2}}{\zeta+C_{0}}  d\zeta=
 \sqrt{C_{0}}  \|f\|^{2}_{L_{2}},\,f \in \mathrm{D}(L) .
\end{equation}
Since in accordance with   Theorem 3.35 \cite[p.281]{firstab_lit:kato1980} the set      $ \mathrm{D} (H)$ is  a core of the operator $H^{ \frac{1}{2}},$    then we can extend \eqref{30} to
  \begin{equation}\label{31}
 \left(H^{\frac{1}{2}}f,f\right)_{\!\!L_{2}}\geq \sqrt{C_{0}} \|f\|^{2}_{L_{2}},\;f\in \mathrm{D}(H^{\frac{1}{2}}).
\end{equation}
Hence $ \mathrm{N} (H^{\frac{1}{2} })=0.$ Combining this fact and  \eqref{28}, we obtain
\begin{equation}\label{32}
 \left(B_{1}+B_{2}\right)  H^{ \frac{1}{2}}f=0  ,\;f\in \mathrm{D}(L).
\end{equation}
In accordance with    Theorem 3.35   \cite[p.281]{firstab_lit:kato1980} the operator $ H ^{ \frac{1}{2}} $ is m-accretive. Hence  combining  Theorem 3.2 \cite[p.268]{firstab_lit:kato1980}        with \eqref{31},
  we obtain   $ \mathrm{R} (H ^{ \frac{1}{2}})=L_{2}(\Omega).$ Taking into account that    $\mathrm{D}(H)=\mathrm{D}(L)$ is a core of the operator $H^{ \frac{1}{2}},$  we   conclude that
$ \mathrm{R}(\check{H} ^{\frac{1}{2} })$ is dense in $L_{2}(\Omega),$ where $\check{H} ^{ \frac{1}{2} }$ is a restriction of  the operator $H ^{ \frac{1}{2}}$ to $L_{2}(\Omega).$ Finally, by virtue of \eqref{32}, we have that the sum   $B_{1}+B_{2}$ equals zero on the dense subset  of $L_{2}(\Omega).$ Since these operators are   defined on $L_{2}(\Omega)$ and  bounded,  then   $B_{1}=-B_{2}.$   Further,   we    use the denotation  $B_{1}:=B.$
 Using the  properties  of the operator $B,$    we get
$\|(I\pm iB)f\|_{L_{2} }\|f\|_{L_{2} }\geq\mathrm{Re }\left([I\pm iB]f,f\right)_{L_{2}} =\|f\|^{2}_{L_{2}},\,f\in L_{2}(\Omega).$ Hence
\begin{equation}\label{33}
\|(I\pm iB)f\|_{L_{2} } \geq \|f\|_{L_{2}},\,f\in L_{2}(\Omega).
\end{equation}
 It implies that the operators  $I\pm iB$ are invertible.
Using  the facts    proven above: $  \mathrm{R} (H^{ \frac{1}{2}})=L_{2}(\Omega),\, \mathrm{N} (H^{\frac{1}{2}})=0$,     we   conclude that there exists the operator $H^{-\frac{1}{2}}$ defined on $L_{2}(\Omega).$
 Taking into account  the   reasoning given above, using    representation   \eqref{27}, we get the following
\begin{equation}\label{34}
R_{L}=H^{-\frac{1 }{2}}(I+iB )^{-1} H^{- \frac{1}{2}},\;R_{L^{\ast}}=H^{-\frac{1 }{2}}(I-iB )^{-1} H^{- \frac{1}{2}}.
\end{equation}
Since we have  $R^{\ast}_{ L}=R^{\,}_{L^{\ast}},$ then
\begin{equation}\label{35}
V=\frac{1}{2}\left(R_{ L}+R_{L^{\ast}}\right),\,V:=  \left(R_{\tilde{W}}\right)_{\mathfrak{R}}.
\end{equation}
Combining  \eqref{34},\eqref{35},  we get
\begin{equation}\label{36}
V=\frac{1}{2}\,H^{-\frac{1 }{2}}\left[(I+iB )^{-1}+(I-iB )^{-1} \right]H^{- \frac{1}{2}}.
\end{equation}
 Using the obvious identity
$
(I+B^{2})=(I+iB ) (I-iB )= (I-iB )(I+iB ),
$
 by  direct calculation  we   get
\begin{equation}\label{37}
(I+iB )^{-1}+(I-iB )^{-1}=(I+B^{2})^{-1}.
\end{equation}
Combining \eqref{36},\eqref{37}, we obtain
\begin{equation}\label{38}
V=\frac{1}{2}\,H^{-\frac{1 }{2}}  (I+B^{2} )^{-1}  H^{- \frac{1}{2}}.
\end{equation}
Let us evaluate the form $\left(V  f,f\right)_{L_{2}}.$   Note that   by virtue of   Theorem \cite[p.174]{firstab_lit:Krasnoselskii M.A.}    there exists a unique  square root of the operator $ R_{H} $ the  selfadjoint operator $ \hat{R} $ such that
  $\hat{R} \hat{R}  =R_{H}.$     Using the    decomposition $H=H^{\frac{1}{2}}H^{\frac{1}{2}},$   we get   $H^{-\frac{1}{2}}H^{-\frac{1}{2}}H=I.$ Therefore
$R_{H}\subset H^{-\frac{1}{2}}H^{-\frac{1}{2}}.$  On the over hand    $  \mathrm{D} (R_{H})=L_{2}(\Omega),$ hence  $R_{H}=H^{-\frac{1}{2}}H^{-\frac{1}{2}}.$  Using  the  uniqueness property  of    square root  we   obtain       $H^{-\frac{1}{2}}= \hat{R}.$      Note that due to the obvious inequality   $ \|Sf\|_{L_{2}}  \geq\|f\|_{L_{2}},\,f\in L_{2}(\Omega) ,\,S:=I+B^{2}$  the  operator $S^{-1}$ is bounded on the set $ \mathrm{R} (S).$ Taking into account the reasoning given above, we get
 $$
\left(V  f,f\right)_{L_{2}}=\left(H^{-\frac{1 }{2}} S^{-1}     H^{- \frac{1}{2}}   f,f\right)_{L_{2}}=
\left( S^{-1}     H^{- \frac{1}{2}}   f,H^{-\frac{1 }{2}}f\right)_{L_{2}}
$$
$$
\leq \|S^{-1}     H^{- \frac{1}{2}}   f \| _{L_{2}}  \|H^{- \frac{1}{2}}   f \|_{L_{2}}\leq   \|S^{- 1}\|  \cdot  \|   H^{- \frac{1}{2}}   f \|^{2}_{L_{2}}
=
  \|S^{- 1}       \|\cdot\left(R_{H }  f,f\right)_{L_{2}},\;f\in L_{2}(\Omega).
$$
On the other hand,  it is easy to see that  $(S^{-1}f,f)_{L_{2}}\geq \|S^{-1}f\|^{2}_{L_{2}},\,f\in \mathrm{ R} (S).$ At the same time   it is obvious that   $S$ is bounded and we have   $\|S^{-1}f\|_{L_{2}}\geq \|S\|^{-1} \|f\|_{L_{2}},\,f\in  \mathrm{R} (S).$ Using   these estimates, we get
$$
\left(V f,f\right)_{L_{2}}=\left( S^{-1}     H^{- \frac{1}{2}}   f,H^{-\frac{1 }{2}}f\right)_{L_{2}}\geq  \|S^{-1}     H^{- \frac{1}{2}}   f \|^{2}_{L_{2}}
$$
$$
\geq  \| S   \|^{-2} \cdot  \|   H^{- \frac{1}{2}}   f \|^{2}_{L_{2}}= \| S   \|^{-2}  \cdot\left(R_{H }  f,f\right)_{L_{2}},\;f\in L_{2}(\Omega).
$$
 Taking into account that  the operators $V $ and $R_{H}$ are compact,  using   Lemma 1.1 \cite[p.45]{firstab_lit:1Gohberg1965}, we obtain
 \eqref{26}. Combining \eqref{25},\eqref{26} we obtain the desired result.
\end{proof}

The following theorem gives us a description in terms of the Schatten–von-Neumann classes.

\begin{teo}\label{T1} We have the following  sufficient and necessary conditions
\begin{equation*}
R_{L}\in  \mathfrak{S}_{p},\,p= \left\{ \begin{aligned}
\!l,\,l>n,\,n\geq2,\\
   1,\,n=1    \\
\end{aligned}
 \right.\;,
\end{equation*}
$$
 R_{ L}\in\mathfrak{S}_{p}\;  \Rightarrow \;  n< 2p,\;1\leq p<\infty.
$$
 \end{teo}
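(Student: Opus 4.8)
The plan is to reduce the entire statement to the sharp singular-number asymptotics $s_i(R_L)\asymp i^{-2/n}$ and then read off both displays. Granting these asymptotics, membership $R_L\in\mathfrak{S}_p$ is equivalent to convergence of $\sum_i s_i^{p}(R_L)\asymp\sum_i i^{-2p/n}$, i.e. to $2p/n>1$, that is $n<2p$. This gives at once the necessary condition of the second display, while the sufficient conditions of the first display follow because $p=l>n$ (for $n\ge2$) and $p=1,\,n=1$ both satisfy $n<2p$. Thus the work is entirely in establishing the two-sided bound on $s_i(R_L)$.

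For the lower estimate I would use only the positivity of the real component. Since $V:=(R_L)_{\mathfrak{R}}$ is selfadjoint and positive (as established in the introductory remarks), for every unit vector $f$ we have $\|R_Lf\|_{L_2}\ge\mathrm{Re}(R_Lf,f)_{L_2}=(Vf,f)_{L_2}\ge0$. Hence, by the min--max characterisation of the eigenvalues of the positive compact operators $R_L^{\ast}R_L$ and $V$, one gets $s_i^2(R_L)=\lambda_i(R_L^{\ast}R_L)\ge\lambda_i^2(V)$, so that $s_i(R_L)\ge\lambda_i(V)$. Lemma \ref{L3} then yields $s_i(R_L)\ge c\,i^{-2/n}$.

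For the upper estimate I would exploit the factorisation \eqref{34}. Writing $H^{-1/2}=R_H^{1/2}$ and $M:=(I+iB)^{-1}$, we have $R_L=R_H^{1/2}MR_H^{1/2}$, where $R_H^{1/2}$ is compact, selfadjoint and positive, and $M$ is bounded with $\|M\|\le1$ by \eqref{33}. Applying the multiplicative inequality $s_{i+j-1}(AB)\le s_i(A)\,s_j(B)$ (see \cite{firstab_lit:1Gohberg1965}) with $A=R_H^{1/2}$, $B=MR_H^{1/2}$ and $i=j$ gives $s_{2i-1}(R_L)\le\|M\|\,s_i^2(R_H^{1/2})=\|M\|\,\lambda_i(R_H)$, the last equality because $s_i(R_H^{1/2})=\lambda_i(R_H)^{1/2}$ for the positive operator $R_H^{1/2}$. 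Since $\lambda_i(R_H)\asymp i^{-2/n}$ by \eqref{25}--\eqref{26}, and the singular numbers are nonincreasing, I obtain $s_i(R_L)\le C\,i^{-2/n}$. Combining the two one-sided estimates yields $s_i(R_L)\asymp i^{-2/n}$, and the theorem follows as explained above.

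The essential analytic difficulty --- the two-sided spectral comparison with the Laplacian --- is already contained in Lemma \ref{L3}, so the present theorem is in effect its corollary. The only points requiring care are the index shift $2i-1\mapsto i$, which is absorbed into the constants by monotonicity of $s_i$, and the verification that the threshold $n<2p$ is genuinely two-sided rather than merely sufficient. I note in passing that the sufficient condition $l>n$ appearing in the first display is not sharp, the sharp requirement being $2l>n$; it is of course subsumed by it, so no difficulty arises.
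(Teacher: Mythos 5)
Your proposal is correct, but it takes a genuinely different route from the paper, and in fact it proves strictly more than the theorem asserts. The paper never establishes the two-sided bound $s_i(R_L)\asymp i^{-2/n}$. For sufficiency (case $n\geq 2$) it compares $F=R_L^{\ast}R_L$ with $V=(R_L)_{\mathfrak{R}}$ through the accretivity estimate $\|R_Lf\|^{2}_{L_2}\leq C^{-1}\,\mathrm{Re}(R_Lf,f)_{L_2}$ and Lemma 1.1 of Gohberg--Krein, which yields only
$$
s_i(R_L)=\lambda_i(F)^{1/2}\leq C\, i^{-1/n},
$$
and this square-root loss is precisely why the threshold in the statement reads $p>n$ rather than $p>n/2$; the case $n=1$ is then handled by a separate matrix-trace argument (Lemma 8.1 and Theorem 8.1 of Gohberg--Krein), and the necessity $n<2p$ is extracted from the Weyl-type inequality (7.9) of Gohberg--Krein applied to the diagonal of $R_L$ in the eigenbasis of $V$, after a lengthy verification that $V$ possesses a complete orthonormal system of eigenvectors. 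Your argument replaces all three steps: the max--min comparison (testing on the span of the first $i$ eigenvectors of $V$ and using $\|R_Lf\|_{L_2}\geq (Vf,f)_{L_2}$) gives the pointwise bound $s_i(R_L)\geq\lambda_i(V)$, which strengthens the paper's summed inequality and dispenses with the completeness question; and the factorization \eqref{34} together with $\|(I+iB)^{-1}\|\leq 1$ from \eqref{33} and Fan's inequality $s_{i+j-1}(AB)\leq s_i(A)s_j(B)$ gives $s_{2i-1}(R_L)\leq\lambda_i(R_H)\leq C i^{-2/n}$, avoiding the square-root loss entirely. Consequently you obtain $R_L\in\mathfrak{S}_p$ if and only if $p>n/2$, which subsumes both displays in every dimension at once, shows (as you note) that the sufficient condition $l>n$ is not sharp, and upgrades the paper's one-directional implication to an equivalence. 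What the paper's route buys is that it stays within elementary form-comparison results; what yours buys is sharpness and uniformity in $n$. Two small points should be made explicit in a final write-up: the step $s_i\bigl((I+iB)^{-1}H^{-1/2}\bigr)\leq\|(I+iB)^{-1}\|\,s_i(H^{-1/2})$ hidden in your chain, and the fact that $\lambda_i(R_H)\leq Ci^{-2/n}$ for \emph{all} $i$ (not just large $i$) uses \eqref{25}--\eqref{26} together with the strict positivity \eqref{8} of $H$ to control the finitely many initial indices.
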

\begin{proof}
  Consider the case $(n\geq2).$ Since we already know that $R_{  L   }^{*}=R_{L^{\ast} }^{\!},$ then it can easily be checked that
the operator $F\!:= R_{  L   }^{*}  R_{L  }^{\!} $   is a selfadjoint positive compact operator. Due to the well-known fact   \cite[p.174]{firstab_lit:Krasnoselskii M.A.} there exists a unique  square root of the operator $F,$ the selfadjoint  positive operator $F^{\frac{1}{2}}$ such that $F=F^{\frac{1}{2}}F^{\frac{1}{2}}.$ By virtue  of Theorem 9.2 \cite[p.178]{firstab_lit:Krasnoselskii M.A.} the operator $F^{\frac{1}{2}}$ is compact.
Since $ \mathrm{N} (F) =0,$  it follows that $ \mathrm{N} (F^{\frac{1 }{2}})=0.$ Hence      applying   Theorem \cite[p.189]{firstab_lit: Ahiezer1966}, we get that the operator
$F^{\frac{1 }{2}}$ has an infinite  set of the eigenvalues.   Using  condition \eqref{6}, we get
$$
{\rm Re}(R_{ L}f,f)_{L_{2} }\geq C_{1}\|R_{ L}f\|^{2}_{L_{2}},\;f\in L_{2}(\Omega).
$$
Hence
$$
(F f,f)_{L_{2}}=\|R_{ L}f\|^{2}_{L_{2}}\leq C^{-1}_{1}{\rm Re}(R_{ L}f,f)_{L_{2}}= C^{-1}_{1}(V f,f)_{L_{2}},\,V:=  \left(R_{ L}\right)_{\mathfrak{R}}.
$$
Since we already know that the operators $F,V$ are compact, then  using  Lemma 1.1 \cite[p.45]{firstab_lit:1Gohberg1965}, Lemma \ref{L3},  we get
\begin{equation}\label{39}
 \lambda_{i} (F )\leq C^{-1}_{1} \,\lambda_{i}(V)\leq  C  i^{-2/n },\;i\in \mathbb{N}.
\end{equation}
 Recall  that by  definition, we have     $s_{i}(R_{ L})= \lambda_{ i }( F^{\frac{1 }{2}}  ).$ Note that the operators $F^{\frac{1}{2}},F$ have the  same eigenvectors.
This fact can be easily proved if we note   the obvious  relation
$
Ff_{i}=|\lambda _{i} (F^{\frac{1}{2}})|^{2} f_{i},\, i\in \mathbb{N}
$
and
  representation for a square root of a  selfadjoint positive compact operator
$$
F^{\frac{1 }{2}}f=\sum\limits_{i=1}^{\infty}\sqrt{\lambda _{ i }(F)}  \left(f,\varphi_{i}\right)_{\!L_{2}} \! \varphi_{i}, \,f\in L_{2}(\Omega),
$$
where  $f_{i}\,, \varphi_{i}$ are    eigenvectors  of the  operators $F^{\frac{1}{2}},F$ respectively  (see (10.25) \cite[p.201]{firstab_lit:Krasnoselskii M.A.}).  Hence
$ \lambda_{ i }( F^{\frac{1 }{2}}  ) =
\sqrt{\lambda _{ i }( F  )} ,\,i\in \mathbb{N}.$
Combining this fact with  \eqref{39}, we get
$$
\sum\limits_{i=1}^{\infty}s^{p}_{i}(R_{ \tilde{W}})=
\sum\limits_{i=1}^{\infty}\lambda_{i}^{\frac{p}{2} }( F   )\leq C \sum\limits_{i=1}^{\infty} i^{-\frac{  p }{n}}.
$$
This completes the proof for the case $(n\geq2).$

Consider the case $(n=1).$ Since  $V$ is positive     and   bounded, then   by virtue  of Lemma 8.1 \cite[p.126]{firstab_lit:1Gohberg1965}  we have that for any  orthonormal basis $\{\psi_{i}\}_{1}^{\infty}\subset L_{2}(\Omega)$ the next equalities  hold
$$
\sum\limits_{i=1}^{\infty}{\rm Re}(R_{L}\psi_{i},\psi_{i})_{L_{2}}=\sum\limits_{i=1}^{\infty}  (V \psi_{i},\psi_{i})_{L_{2}}=\sum\limits_{i=1}^{\infty}  (V\, \varphi_{i},\varphi_{i})_{L_{2}} ,
$$
where $\{\varphi_{i}\}_{1}^{\infty}$ is an orthonormal  basis of the  eigenvectors of the operator $V.$
Due to  Lemma   \ref{L3}, we get
$$
  \sum\limits_{i=1}^{\infty}  (V \varphi_{i},\varphi_{i})_{L_{2}} =\sum\limits_{i=1}^{\infty}
 s_{i}(V) \leq C  \sum\limits_{i=1}^{\infty}  i^{-\frac{2}{n}  }  .
$$
By virtue of  Lemma \ref{L2}, we have $ |{\rm Im} (R_{L}\psi_{i},\psi_{i})_{L_{2}}|\leq b^{-1} (\xi)\, {\rm Re}(R_{L}\psi_{i},\psi_{i})_{L_{2}}.$ Hence  the  following series converges, i.e.
$$
\sum\limits_{i=1}^{\infty} (R_{L}\psi_{i},\psi_{i})_{L_{2}}<\infty.
$$
Hence, by definition \cite[p.125]{firstab_lit:1Gohberg1965} the operator $R_{L}$ has  a finite matrix trace.
 Using    Theorem 8.1 \cite[p.127]{firstab_lit:1Gohberg1965}, we get   $R_{ L}\in \mathfrak{S}_{1}.$ This completes the proof for the case $(n=1).$

Now, assume that  $R_{L}\in \mathfrak{S}_{p},\,1\leq p<\infty.$   Recall that $V$ is  compact  and  let us show that the   operator $V$ has a complete orthonormal  system of the  eigenvectors.   Using   formula \eqref{39},   we get
$$
V^{\!^{-1}}=2H^ \frac{1 }{2}  (I+B^{2})      H^{ \frac{1}{2}},\;  \mathrm{D}  (V^{\!^{-1}})=  \mathrm{R} (V).
$$
It can easily be checked that   $ \mathrm{D} (V^{\!^{-1}})\subset  \mathrm{D} (H).$
Using this fact, we get
\begin{equation}\label{40}
 (V^{^{-1}}\!\!f,f)_{L_{2}}  =2(  S      H^{ \frac{1}{2}}f,H^ \frac{1 }{2}f)_{L_{2}}\geq 2\|H^ \frac{1 }{2} f\|^{2}_{L_{2}}=2(Hf,f)_{L_{2}}  ,\,f\in  \mathrm{D} (V^{^{-1}}),
\end{equation}
where $S=I+B^{2}.$
 Since  $V$ is selfadjoint, then   due to   Theorem 3 \cite[p.136]{firstab_lit: Ahiezer1966}    $V^{^{-1}}$ is selfadjoint. Combining  \eqref{8} and \eqref{40}, we obtain  $\Theta(V^{^{-1}}) \subset \{ z\in    \mathbb{R}  :  z > C\}. $
 Since  the operator $H$ has a discrete spectrum (see Theorem 5.3 \cite{firstab_lit:1kukushkin2018}), then any set  bounded   with respect to the      norm $\mathfrak{H}_{H}$ is a compact set   with respect to the norm    $L_{2}(\Omega)$ (see Theorem 4 \cite[p.220]{firstab_lit:mihlin1970}). Combining this fact with \eqref{40}, Theorem 3 \cite[p.216]{firstab_lit:mihlin1970}, we get
  that the operator $V^{^{-1}}$ has a discrete spectrum, i.e. it has  an infinite set of  the eigenvalues
$\lambda_{1}\leq\lambda_{2}\leq...\leq\lambda_{i}\leq..., \, \lambda_{i} \rightarrow \infty  ,\,i\rightarrow \infty$ and a  complete orthonormal system of the eigenvectors.
 Now  note that the operators $V,\,V^{^{-1}}$ have the same eigenvectors.  Therefore   the operator   $V$ has a complete orthonormal  system of the eigenvectors. Recall  that any  complete orthonormal system  is a  basis in  separable Hilbert space. Hence the system of the  eigenvectors of the operator $V$ is
  a basis in the space $L_{2}(\Omega).$
 Let  $\{\varphi_{i}\}_{1}^{\infty}$ be a complete orthonormal set of the  eigenvectors of the  operator  $V.$    Using    inequalities  (7.9) \cite[p.123]{firstab_lit:1Gohberg1965}, Lemma  \ref{L3},  we get
$$
\sum\limits_{i=1}^{\infty}|s_{i} (R_{L} )|^{p}\geq\sum\limits_{i=1}^{\infty}| (R_{L}\varphi_{i},\varphi_{i})_{L_{2}}|^{p}\geq\sum\limits_{i=1}^{\infty}|{\rm Re}(R_{L}\varphi_{i},\varphi_{i})_{L_{2}}|^{p}=
$$
$$
=\sum\limits_{i=1}^{\infty}| (V \varphi_{i},\varphi_{i})_{L_{2}}|^{p}=\sum\limits_{i=1}^{\infty}
|\lambda_{i}(V)|^{p}\geq C   \sum\limits_{i=1}^{\infty}  i^{- \frac{2 p }{n}}  .
$$
We claim   that $n<2/p.$  Assuming the converse in the last inequality,  we come to contradiction with the imposed      condition  $R_{ L}\in\mathfrak{S}_{p}.$
 \end{proof}

     The following theorem establishes a sufficient condition for  completeness   of the root functions system  of the operator $R_{L}.$
   This result  is formulated for the multidimensional case, however it is    remarkable that  the obtained sufficient condition   gives us an opportunity to claim    that  a     root functions system is always  complete in the cases of  the dimensions  $n= 1,2.$

\begin{teo}\label{T2}
If   $\theta< \pi/n,$  then   the     root  functions system of the operator $R_{L}$ is complete,   where $\theta$ is the semi-angle of the  sector $\mathfrak{L}_{0}(\theta).$
 \end{teo}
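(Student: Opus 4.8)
The plan is to derive the completeness of the root functions of $R_{L}$ from an abstract completeness theorem for compact operators of Gohberg--Krein type \cite{firstab_lit:1Gohberg1965}, feeding it the two facts already assembled: $R_{L}$ is compact with $\Theta(R_{L})\subset\mathfrak{L}_{0}(\theta)$ (the Remark following Lemma \ref{L2}), and its real component obeys the sharp asymptotics $\lambda_{i}[(R_{L})_{\mathfrak{R}}]\asymp i^{-2/n}$ (Lemma \ref{L3}). Since the root functions of $R_{L}$ and of $L$ correspond to one another, it suffices to treat the compact operator $R_{L}$.

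First I would pin down the order of $R_{L}$. From the representation \eqref{34}, $R_{L}=H^{-\frac12}(I+iB)^{-1}H^{-\frac12}$ with $(I+iB)^{-1}$ bounded, so the submultiplicative inequalities $s_{i+j-1}(XY)\le s_{i}(X)s_{j}(Y)$ and $s_{i}(CX)\le\|C\|\,s_{i}(X)$ give $s_{2i-1}(R_{L})\le\|(I+iB)^{-1}\|\,s_{i}(H^{-\frac12})^{2}=\|(I+iB)^{-1}\|\,\lambda_{i}(R_{H})$. Since $\lambda_{i}(H)\asymp i^{2/n}$ by \eqref{25}, we have $\lambda_{i}(R_{H})\asymp i^{-2/n}$, whence $s_{i}(R_{L})\le C\,i^{-2/n}$ and $R_{L}\in\mathfrak{S}_{p}$ for every $p>n/2$. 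Thus the convergence exponent of $R_{L}$ equals $n/2$, matching the order of its real component in Lemma \ref{L3}; this is the sharpening of Theorem \ref{T1} (whose cruder bound only yields $p>n$) that the argument needs.

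Next, the inclusion $\Theta(R_{L})\subset\mathfrak{L}_{0}(\theta)$ yields the resolvent estimate $\|(R_{L}-\lambda)^{-1}\|\le[\mathrm{dist}(\lambda,\mathfrak{L}_{0}(\theta))]^{-1}=O(|\lambda|^{-1})$ along every ray $\arg\lambda=\phi$ with $|\phi|>\theta$. I would then choose a finite family of rays from the origin: the two edges $\arg\lambda=\pm\theta$ of the sector together with auxiliary rays partitioning the complementary angle $2\pi-2\theta$, arranged so that each pair of neighbouring rays subtends an angle strictly less than $\pi/p$, where $p$ is fixed with $n/2<p<\pi/(2\theta)$. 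Such a $p$ exists precisely because the hypothesis $\theta<\pi/n$ forces $n/2<\pi/(2\theta)$, and then the single unavoidable gap, the sector itself of opening $2\theta$, satisfies $2\theta<\pi/p$. On each ray the resolvent decays like $|\lambda|^{-1}$, while $R_{L}\in\mathfrak{S}_{p}$ bounds the order of the entire function $((I-\lambda R_{L})^{-1}u,g)$ attached to a putative vector $g$ orthogonal to every root subspace; the Phragmén--Lindelöf estimate in each narrow angle forces that function to be constant, so $g=0$. This is exactly the Gohberg--Krein completeness theorem, and it yields the assertion.

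The crux is the sharp membership $R_{L}\in\mathfrak{S}_{p}$ for $p>n/2$: it is precisely the ingredient that turns the naive angular threshold $\pi/(2n)$ (which the exponent $p>n$ of Theorem \ref{T1} would give) into the stated $\pi/n$, since the binding relation is $2\theta<\pi/p$ with $p\downarrow n/2$. Equivalently, one may invoke a completeness theorem phrased directly through the order of the real component, in which case the condition reads $\theta<\pi/(2\cdot n/2)=\pi/n$ by Lemma \ref{L3} without re-deriving the singular-number bound. Finally, the cases $n=1,2$ follow at once, for then $\pi/n\ge\pi/2$ exceeds every admissible semi-angle $\theta<\pi/2$, so the hypothesis is automatic.
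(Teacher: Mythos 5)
Your proposal is correct in substance, but it follows a genuinely different route from the paper's. The paper invokes Theorem 6.2 of Gohberg--Krein (p.\,305), whose hypotheses are (a) the cone spanned by $\widetilde{\Theta}(R_{L})$ has opening $\pi/d$ with $d>1$, and (b) $s_{i}\bigl((e^{i\beta}R_{L})_{\mathfrak{I}}\bigr)=o(i^{-1/d})$ for some rotation $\beta$; the whole point of the paper's argument is the choice $\beta=\pi/2$, which turns condition (b) into a statement about the real component $V=(R_{L})_{\mathfrak{R}}$, i.e.\ exactly Lemma \ref{L3}, while the hypothesis $\theta<\pi/n$ enters through $d=\pi/\vartheta(R_{L})\geq\pi/(2\theta)>n/2$ (the paper also needs a separate contradiction argument to show $\vartheta(R_{L})>0$, which your route avoids). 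You instead work with the full operator: from the factorization \eqref{34} and the Ky Fan inequalities you obtain $s_{2i-1}(R_{L})\leq\|(I+iB)^{-1}\|\,\lambda_{i}(R_{H})\leq Ci^{-2/n}$, hence $R_{L}\in\mathfrak{S}_{p}$ for every $p>n/2$, and you then feed this into the ray/Phragm\'en--Lindel\"of completeness theorem with the angular condition $2\theta<\pi/p$. Both routes rest ultimately on the two-sided eigenvalue asymptotics \eqref{25} for $H$; what yours buys is a sharpening of Theorem \ref{T1} as a by-product (the sufficient condition $p>n$ improves to the sharp $p>n/2$, matching the paper's necessary condition), while the paper's route is shorter because it never needs Schatten membership of $R_{L}$ itself, only the $s$-number decay of its real component. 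Two small repairs to your write-up: the rays cannot be taken along the edges $\arg\lambda=\pm\theta$, where $\mathrm{dist}(\lambda,\mathfrak{L}_{0}(\theta))=0$ and your resolvent bound degenerates --- take $\arg\lambda=\pm(\theta+\varepsilon)$ with $2\theta+2\varepsilon<\pi/p$, which the strict inequality permits; and what the Gohberg--Krein ray theorem actually requires is uniform boundedness of $(I-\lambda R_{L})^{-1}$ along the rays, which follows from $\|(I-\lambda R_{L})^{-1}\|=|\lambda|^{-1}\|(R_{L}-1/\lambda)^{-1}\|\leq(\sin\varepsilon)^{-1}$ rather than from the decay of $(R_{L}-\lambda)^{-1}$ as you phrase it.
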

\begin{proof}

We mentioned above that    $ \Theta (R_{L})\subset \mathfrak{ L }_{0}(\theta).$  Note that the map
     $z: \mathbb{C}\rightarrow \mathbb{C},\;  z=1/\zeta,$ takes each    eigenvalue  of the operator $R_{L}$  to the eigenvalue  of the operator $L.$ It is also clear    that   $z:\mathfrak{ L }_{0}(\theta)\rightarrow \mathfrak{ L }_{0}(\theta).$
  Using   the definition  \cite[p.302]{firstab_lit:1Gohberg1965} let us consider the following set
\begin{equation*}
 \widetilde{\mathcal{W}}_ {R_{L}} :=\left\{z:\,z=t\, \xi,\,\xi\in \widetilde{\Theta}(R_{L}),\, 0\leq t<\infty\right\}.
\end{equation*}
It is easy to see  that $\widetilde{\mathcal{W}}_ {R_{L}}$ coincides with a closed sector of the complex plane  with the vertex situated  at the point zero. Let us denote  by
 $\vartheta(R_{L})$ the angle of this sector. It is obvious that $\widetilde{\mathcal{W}}_ {R_{L}}\subset \mathfrak{ L }_{0}(\theta).$ Therefore    $ 0 \leq\vartheta(R_{L}) \leq  2\theta.$ Let us prove that     $0 <\vartheta(R_{L}),$ i.e.    strict inequality holds.   If we assume  that $\vartheta(R_{L})=0,$ then we get  $ e^{-i \mathrm{arg} z} =\varsigma,\,\forall z\in\widetilde{\mathcal{W}}_ {R_{L}}\setminus 0, $
 where $\varsigma$  is a constant independent on $z.$ In consequence of this fact   we have $\mathrm{Im}\, \Theta (\varsigma R_{L})=0.$ Hence the operator $\varsigma R_{L}$ is symmetric (see Problem  3.9 \cite[p.269]{firstab_lit:kato1980}) and by virtue of the fact   $  \mathrm{D}  (\varsigma R_{L})=L_{2}(\Omega)$ one is selfadjoint. On the other hand, taking into account   the equivalence   $R^{\ast}_{ L}=R^{\,}_{ L^{\ast}},$     we have $(\varsigma R_{L}f,g)_{L_{2}}=( f,\bar{\varsigma} R_{L^{\ast}}g)_{L_{2}},\,  f,g\in L_{2}(\Omega).$ Hence
 $\varsigma R_{L}=\bar{\varsigma} R_{L^{\ast}}.$ In the particular case, we have $\forall f\in L_{2}(\Omega),\,\mathrm{Im}f=0:\, \mathrm{Re}\,\varsigma\,R_{L}f= \mathrm{Re}\,\varsigma\,R^{\,}_{ L^{\ast}}f,\,\mathrm{Im}\,\varsigma\,R_{L}f= -\mathrm{Im}\,\varsigma\,R^{\,}_{ L^{\ast}}f  .  $ It implies that  $ \mathrm{N} (R_{L})\neq 0.$  This contradition concludes the proof of the fact $ \vartheta(R_{L})>0.$
    Let us use   Theorem 6.2 \cite[p.305]{firstab_lit:1Gohberg1965}  according to  which  we have the following. If the next two conditions  (a) and (b) are fulfilled, then    the set of   root functions  of the operator $R_{L}$ is complete.  \\

\noindent  a) $\vartheta(R_{L})=  \pi/ d ,$ where $d>1,$\\

\noindent b) for some $\beta,$ the  operator $B:=\left(e^{ i \beta}R_{L} \right)_{\mathfrak{I}}:\;s_{i}(B)=o(i^{-1/d }),\,i\rightarrow \infty .$\\

\noindent Let us show that conditions (a) and (b) are fulfilled. Note that due to Lemma \ref{L2} we have  $0\leq\theta<\pi/2.$
Hence $ 0<\vartheta(R_{L})<\pi.$ It implies that  there exists  $1<d<\infty$ such that
$\vartheta(R_{L})=  \pi/ d.$  Thus   condition  (a) is fulfilled.
  Let us choose a certain value  $\beta=\pi/2$ in   condition (b) and notice  that $\left(e^{ i \pi/2}R_{L} \right)_{\mathfrak{I}}=\left( R_{L} \right)_{\mathfrak{R}}. $ Since the operator $V:=\left( R_{L} \right)_{\mathfrak{R}}$ is  selfadjoint, it follows that $s_{i}(V)=\lambda_{i}(V),\,i\in \mathbb{N}.$
  In consequence of Lemma  \ref{L3}, we have
\begin{equation*}
 s_{i}(V)\, i^{1/d}    \leq C\cdot i^{1/d-2/n} ,\,i\in \mathbb{N}.
\end{equation*}
 Hence,     it is sufficient  to show that $d>n/2$ to achieve  condition  (b).
By virtue of the conditions $ \vartheta(R_{L}) \leq 2 \theta,\,\theta< \pi/n,$ we have $d=\pi/\vartheta(R_{L})\geq \pi/2\theta>n/2.$ Hence we obtain  $s_{i}(V)=o(i^{-1/d }).$
 Since   both    conditions (a),(b) are fulfilled, then using   Theorem 6.2 \cite[p.305]{firstab_lit:1Gohberg1965} we complete the proof.
\end{proof}
\begin{rem} Assume that the root functions system is complete; then using the definition, it is easy to prove that
  the set of the eigenvalues  is infinite. Thus  in the cases (n=1,2) we have an infinite set of the eigenvalues.
\end{rem}

The theorems proven above is devoted  to   description of  $s$-numbers    of the operator $R_{L},$    but    questions related with    the  eigenvalues asymptotic    are still relevant in our work. The following theorem establishes an asymptotic formula.
\begin{teo}\label{T3}
   The following inequality   holds
 \begin{equation}\label{41}
\sum\limits_{i=1}^{k}|\lambda_{i}(R_{L})|^{p}\leq
\sec ^{p} \theta \,\left\|S^{- 1}      \right\| \sum\limits_{i=1}^{k } \,\gamma^{p}_{0}(i)\, i^{- 2p/n  },
$$
$$
 (k=1,2,...,\nu(R_{L})),\,1\leq p<\infty,
\end{equation}
where $\theta=\arctan  \{  1/b(\xi) \} ,\, \gamma_{ 0 } (i)  =[\mathcal{E} _{0}+  \phi_{0}(i^{2/n})/i^{2/n}]^{-1}.$
Moreover if $\nu(R_{L})=\infty,$  then the following asymptotic formula holds
$$
|\lambda_{i}(R_{L})|=  o \left(i^{-2/n+\varepsilon}   \right),\,i\rightarrow \infty,\,\forall\varepsilon>0.
$$
\end{teo}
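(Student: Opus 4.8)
The plan is to combine the sectorial confinement of the spectrum of $R_L$ with a majorization of the eigenvalue real parts by the eigenvalues of the selfadjoint operator $V=(R_L)_{\mathfrak{R}}$, and then to feed in the spectral estimate for $V$ obtained while proving Lemma \ref{L3}. First I would record the elementary sectorial bound. Since $\widetilde{\Theta}(R_L)\subset\mathfrak{L}_0(\theta)$ by the Remark following Lemma \ref{L2}, and every eigenvalue of $R_L$ lies in $\widetilde{\Theta}(R_L)$, each eigenvalue satisfies $|\lambda_i(R_L)|\leq\sec\theta\,\mathrm{Re}\,\lambda_i(R_L)$, with $\mathrm{Re}\,\lambda_i(R_L)>0$ because $R_L$ is accretive and injective. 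Hence, for $p\geq1$,
\begin{equation*}
\sum_{i=1}^{k}|\lambda_i(R_L)|^{p}\leq\sec^{p}\theta\sum_{i=1}^{k}\bigl(\mathrm{Re}\,\lambda_i(R_L)\bigr)^{p},
\end{equation*}
the right-hand sum being taken over the $k$ eigenvalues of largest modulus.

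The central step is a majorization. Invoking the Ky Fan type inequalities for a compact operator and its real component (see \cite{firstab_lit:1Gohberg1965}), with the eigenvalues of $R_L$ arranged by nonincreasing real part and those of $V$ by nonincreasing value, one has $\sum_{i=1}^{j}\mathrm{Re}\,\lambda_i(R_L)\leq\sum_{i=1}^{j}\lambda_i(V)$ for every $j$; that is, $\{\mathrm{Re}\,\lambda_i(R_L)\}$ is weakly majorized by $\{\lambda_i(V)\}$. Since $t\mapsto t^{p}$ is increasing and convex on $[0,\infty)$ for $p\geq1$, the Hardy--Littlewood--P\'olya majorization theorem upgrades this to $\sum_{i=1}^{k}(\mathrm{Re}\,\lambda_i(R_L))^{p}\leq\sum_{i=1}^{k}\lambda_i(V)^{p}$; here I also use that summing $(\mathrm{Re}\,\lambda)^{p}$ over the $k$ largest-modulus eigenvalues is dominated by the same sum over the $k$ largest-real-part eigenvalues, as $t\mapsto t^{p}$ is increasing. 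It remains to substitute the estimate extracted in the proof of Lemma \ref{L3}: combining $(Vf,f)\leq\|S^{-1}\|(R_Hf,f)$, read off from representation \eqref{38}, with the min--max principle gives $\lambda_i(V)\leq\|S^{-1}\|\,\lambda_i(R_H)$, and \eqref{25} yields $\lambda_i(R_H)=\lambda_i(H)^{-1}\leq\gamma_0(i)\,i^{-2/n}$. Since $S=I+B^{2}\geq I$ forces $\|S^{-1}\|\leq1$, we have $\|S^{-1}\|^{p}\leq\|S^{-1}\|$, and collecting the constants produces exactly \eqref{41}.

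For the asymptotic formula I would pass to $k=\nu(R_L)=\infty$ in \eqref{41}. As $\gamma_0(i)\to\mathcal{E}_0^{-1}$ is bounded, the majorant series $\sum_i\gamma_0^{p}(i)\,i^{-2p/n}$ converges once $2p/n>1$; choosing $p$ with $p>\max\{1,n/2\}$ then gives $\sum_{i=1}^{\infty}|\lambda_i(R_L)|^{p}<\infty$, i.e. $R_L\in\mathfrak{S}_{p}$. Because $\{|\lambda_i(R_L)|\}$ is nonincreasing, $k\,|\lambda_k(R_L)|^{p}\leq\sum_{i=1}^{k}|\lambda_i(R_L)|^{p}\leq C$, so $|\lambda_k(R_L)|\leq C^{1/p}k^{-1/p}$. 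For $n\geq2$ the admissible $p$ may be taken with $n/2<p$ arbitrarily close to $n/2$ (automatically $p\geq1$), which pushes $1/p$ up to $2/n$; fixing $\varepsilon>0$ and selecting $p$ with $1/p>2/n-\varepsilon$ yields $|\lambda_k(R_L)|=o(k^{-2/n+\varepsilon})$.

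I expect the main obstacle to be the majorization step: one must cite and correctly orient the Gohberg--Krein/Ky Fan inequality comparing the eigenvalue real parts of the non-selfadjoint operator $R_L$ with the eigenvalues of its real component $V$, reconcile the two natural orderings (by modulus versus by real part), and justify the passage to $p$-th powers by convexity. The sectorial bound and the insertion of the estimate from Lemma \ref{L3} are, by comparison, routine.
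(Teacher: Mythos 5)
Your proposal is correct and, in substance, follows the same route as the paper: the sectorial bound $|\lambda_i(R_L)|\leq\sec\theta\,\mathrm{Re}\,\lambda_i(R_L)$, a majorization of the $p$-th powers of the real parts of the eigenvalues by those of $\lambda_i(V)$, the estimate $\lambda_i(V)\leq\|S^{-1}\|\,\gamma_0(i)\,i^{-2/n}$ extracted from the proof of Lemma \ref{L3}, and the same choice of $p$ (with $2/n-\varepsilon<1/p<2/n$) combined with monotonicity of $|\lambda_i(R_L)|$ for the asymptotic formula. The one genuine difference is how the central inequality is sourced: the paper cites Gohberg--Krein's Theorem 6.1, i.e. \eqref{42}, for imaginary components and applies it to $iR_L$ via the rotation identity $(iA)_{\mathfrak{I}}=A_{\mathfrak{R}}$, so the $p$-th powers come built in and \eqref{44} is immediate; you instead re-derive that inequality from the Ky Fan weak majorization $\sum\mathrm{Re}\,\lambda_i(R_L)\leq\sum\lambda_i(V)$ upgraded by Hardy--Littlewood--P\'olya convexity, and you explicitly reconcile the two orderings (largest modulus versus largest real part), a point the paper passes over silently. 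Your observation that $S=I+B^{2}\geq I$ gives $\|S^{-1}\|\leq 1$, hence $\|S^{-1}\|^{p}\leq\|S^{-1}\|$, is in fact needed to land exactly on the paper's constant $\sec^{p}\theta\,\|S^{-1}\|$ rather than $\sec^{p}\theta\,\|S^{-1}\|^{p}$; the paper leaves this implicit, so your version is the more careful one. Two minor cautions: the aside ``i.e. $R_L\in\mathfrak{S}_{p}$'' does not follow from $\sum_i|\lambda_i(R_L)|^{p}<\infty$ (Schatten membership concerns $s$-numbers, not eigenvalues), though nothing in your argument actually uses it; and your restriction to $n\geq2$ in the final step mirrors a limitation the paper shares, since its own requirement $2/n-\varepsilon<1/p<2/n$ cannot be met with $p\geq1$ when $n=1$ and $\varepsilon<1$.
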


\begin{proof}
In accordance with  Theorem 6.1 \cite[p.81]{firstab_lit:1Gohberg1965}, we have the following inequality for any   bounded linear  operator $A$ with the compact imaginary component
 \begin{equation}\label{42}
\sum\limits_{m=1}^{k}|{\rm Im}\,\lambda_{m}(A)|^{p}\leq \sum\limits_{m=1}^{k}
|s_{m} (A_{\mathfrak{I} } )|^{p},\;\left(k=1,\,2,...,\,\nu_{\,\mathfrak{I}}(A)\right),\,1\leq p<\infty,
\end{equation}
where   $ \nu_{\,\mathfrak{I}}(A) \leq \infty $ is a sum of all  algebraic multiplicities corresponding to the  not real eigenvalues   of the operator $A$ (see   \cite[p.79]{firstab_lit:1Gohberg1965}).     We can also  verify the following relation by direct calculation
 \begin{equation}\label{43}
(iA)_{\mathfrak{I}}=A_{\mathfrak{R}},\; \mathrm{Im }\, \lambda_{m} (i\,A)= \mathrm{Re}\,  \lambda_{m}(A),\;m\in \mathbb{N}.
\end{equation}
By virtue of  the  fact $ \Theta (R_{L})\subset \mathfrak{ L }_{0}(\theta),$ we have ${\rm Re}\,\lambda_{m}(R_{L})>0,\,m=1,2,...,\nu\left(  R_{L} \right) .$ Combining this fact with  \eqref{43}, we obtain $\nu_{\mathfrak{I}}(i R_{L} )=\nu\left(R_{L}\right).$
Taking into account the last identity and combining \eqref{42},\eqref{43},   we obtain
 \begin{equation}\label{44}
\sum\limits_{m=1}^{k}|{\rm Re}\,\lambda_{m}(R_{L})|^{p}\leq \sum\limits_{m=1}^{k}
|s_{m} (V )|^{p} ,\; \left(k=1,\,2,...\,,\nu  (R_{L})\right).
\end{equation}
Due to   Lemma  \ref{L2}, we have
$$
|{\rm Im}\,\lambda_{m}(R_{L})|\leq  \tan \theta \,{\rm Re}\,\lambda_{m}\,(R_{L}),\,m\in \mathbb{N},\,.
$$
Hence, we get
 \begin{equation}\label{45}
| \lambda_{m}(R_{L})|= \sqrt{|{\rm Im}\,\lambda_{m}(R_{L})|^{2}+|{\rm Re}\,\lambda_{m}(R_{L})|^{2}}\leq
$$
$$
 \leq|{\rm Re}\,\lambda_{m}(R_{L})|\cdot \sqrt{\tan^{2}\theta+1}
=\sec \theta\, |{\rm Re}\,\lambda_{m}(R_{L})|,\,m\in \mathbb{N}.
\end{equation}
Combining \eqref{44},\eqref{45}, we obtain
 \begin{equation}\label{46}
\sum\limits_{m=1}^{k}| \lambda_{m}(R_{L})|^{p}\leq \sec^{p}\!\theta\,\sum\limits_{m=1}^{k}
|s_{m} (V )|^{p} ,\; \left(k=1,\,2,...\,,\nu  (R_{L})\right).
\end{equation}
Applying    Lemma \ref{L3}, we complete     the proof of inequality \eqref{41}.

Assume that $\nu(R_{L})=\infty.$  Note   that  the series on  the right side of \eqref{41} converges  in the case $(p>n/2).$  It implies that
 \begin{equation}\label{47}
|\lambda_{i}(R_{L})| i^{1/p}\rightarrow 0,\;i\rightarrow\infty.
\end{equation}
 We can choose a value of $p$ so that for     arbitrary small $\varepsilon >0,$ we have  $   1/p < 2/n ,\,  1/p>2/n-\varepsilon. $ Hence
$$
|\lambda_{i}(R_{L})| i^{ 2/n-\varepsilon}<|\lambda_{i}(R_{L})| i^{ 1/p} ,\,i\in \mathbb{N}.
$$
Applying relation \eqref{47}, we complete the proof of the asymptotic formula.
\end{proof}

\subsection*{Acknowledgments}

  The author warmly   thanks   Suslina  T. A.  for  valuable   comments made during the  report devoted  to the results of this work  and took place 06.12.2017    at   the seminar of Department of mathematical physics St. Petersburg state University, Saint  Petersburg branch of  V.A. Steklov Mathematical Institute of the Russian Academy of science, Russia, Saint  Petersburg.\\
Also enormous gratefulness is expressed to   Shkalikov A. A.  for a  number of valuable recommendations and remarks.

\end{document}